\documentclass[letterpaper, 10 pt, conference]{ieeeconf}

\IEEEoverridecommandlockouts
	\usepackage{adjustbox}
	\usepackage{algorithm}
	\usepackage[noend]{algpseudocode}
	\usepackage{amsfonts}
	\usepackage{amsmath}
	\usepackage{amssymb}
	\usepackage{amsthm}
	\usepackage{bm}
	\usepackage{comment}
	\let\labelindent\relax
	\usepackage{enumitem}
	\usepackage{graphicx}
	\usepackage{mathtools}
	\usepackage{multirow}
	\usepackage{nicefrac}
	\usepackage{pgfplots}
		\pgfplotsset{compat=1.18}
	\usepackage{pgfplotstable}
	\usepackage{tikz}

	\makeatletter
		\let\NAT@parse\undefined
	\makeatother
	\usepackage{hyperref}
		\hypersetup{
			colorlinks=true,
			linkcolor=blue,
			filecolor=magenta,
			citecolor =magenta,
			urlcolor=magenta,
			pdftitle={PANOC-lite},
			breaklinks,
			hypertexnames = true,
			plainpages    = false,
		}
	\usepackage[capitalize,nameinlink]{cleveref}
	\AtBeginDocument{%
		\crefformat{section}{#2\S#1#3}%
		\crefformat{subsection}{#2\S#1#3}%
		\Crefformat{section}{#2\S#1#3}%
		\Crefformat{subsection}{#2\S#1#3}%
	}%

	\usepackage{xstring}
	\let\oldcref\cref
	\let\oldCref\Cref
	\renewcommand{\cref}[1]{%
		\IfStrEq{alg:zerofprpp}{#1}{\panocpp}{\oldcref{#1}}%
	}%
	\renewcommand{\Cref}[1]{%
		\IfStrEq{alg:zerofprpp}{#1}{\Panocpp}{\oldCref{#1}}%
	}%
	\DeclarePairedDelimiter\norm{\lVert}{\rVert}
	\DeclarePairedDelimiter\abs{\lvert}{\rvert}

	\DeclareMathOperator*{\argmin}{arg\,min}

	\DeclareMathOperator*{\minimize}{minimize}
	
	\DeclareMathOperator{\stt}{subject\ to}

	\DeclareMathOperator{\dist}{dist}     
	\DeclareMathOperator{\fix}{fix}       
	\newcommand{\id}{{\rm id}}            
	\DeclareMathOperator{\lip}{lip}       
	\DeclareMathOperator{\prox}{prox}     
	\DeclareMathOperator{\proj}{\Pi}      
	\DeclareMathOperator{\relint}{relint} 
	\DeclareMathOperator{\zer}{zer}       

	\newcommand{\Rnat}[1]{\operatorname{R}_{#1}^{\rm nat}}
	\newcommand{\Rnor}[1]{\operatorname{R}_{#1}^{\rm nor}}

	\newcommand\nnz{{n_\text{z}}}
	\newcommand\nnu{{n_\text{u}}}
	\newcommand\nnc{{n_\text{c}}}

	\newcommand{\exR}{\overline{\mathbb{R}}}

	\newcommand{\cJ}{\mathcal{J}}
	\newcommand{\cK}{\mathcal{K}}

	\newcommand{\R}{\mathbb{R}}
	\newcommand{\N}{\mathbb{N}}
	\newcommand{\C}{\mathcal{C}}
	\newcommand{\fbe}{\varphi_\gamma}
	\newcommand{\eqdef}{\coloneqq}

	\newcommand{\jac}[1]{\mathrm{J}\,{#1}}

	\newcommand{\seq}[1]{(#1)_{k\in\N}}

	\newcommand{\alpaqa}{{\sc alpaqa}}
	
	\newcommand{\panoc}{{\sc panoc}}
	\newcommand{\Panoc}{{\sc Panoc}}
	\NewDocumentCommand{\panocpp}{s}{%
		\IfBooleanTF{#1}{{\sc panoc}{\rm-lite}}{%
			\hyperref[alg:zerofprpp]{{\sc panoc}{\rm-lite}}%
		}%
	}
	\NewDocumentCommand{\Panocpp}{s}{%
		\IfBooleanTF{#1}{{\sc Panoc}{\rm-lite}}{%
			\hyperref[alg:zerofprpp]{{\sc Panoc}{\rm-lite}}%
		}%
	}
	\NewDocumentCommand{\zerofprpp}{s}{%
		\IfBooleanTF{#1}{{\sc panoc}{\rm-lite}}{%
			\hyperref[alg:zerofprpp]{{\sc panoc}{\rm-lite}}%
		}%
	}
	\NewDocumentCommand{\Zerofprpp}{s}{%
		\IfBooleanTF{#1}{{\sc Panoc}{\rm-lite}}{%
			\hyperref[alg:zerofprpp]{{\sc Panoc}{\rm-lite}}%
		}%
	}
	\newcommand{\zerofpr}{{\sc ZeroFPR}}

	\newcommand{\Cpp}{C\kern-0.04em+\kern-0.01em+}

	\newcommand{\algorithmicinitialize}{\textsc{Initialize:}}
	\algnewcommand{\Initialize}{\item[\algorithmicinitialize]}%

	\usepackage{eqparbox}
	\newcommand{\listlabel}[2][l]{\eqmakebox[listlabel@\EnumitemId][#1]{#2}}
	\SetEnumitemKey{widestL}{
		labelwidth=\eqboxwidth{listlabel@\EnumitemId},
		leftmargin=!,
		format=\listlabel,
	}
	\SetEnumitemKey{thm}{%
		label=\textup{(\roman*)},
		ref=\thetheorem(\roman*),
		align=left,
		widestL,
	}

	\newtheorem{theorem}{Theorem}
		\newlist{theoremenum}{enumerate}{1} 
		\setlist[theoremenum]{thm}
		\crefalias{theoremenumi}{theorem}

	\newtheorem{assumption}{Assumption}
		\renewcommand{\theassumption}{\Roman{assumption}}
		\newlist{assumenum}{enumerate}{1} 
		\setlist[assumenum]{label=\textup{(\roman*)}, ref=\theassumption(\roman*), widestL}
		\crefalias{assumenumi}{assumption}

	\newtheorem{corollary}[theorem]{Corollary}
		\newlist{corollenum}{enumerate}{1} 
		\setlist[corollenum]{thm}
		\crefalias{corollenumi}{corollary}

	\newtheorem{lemma}[theorem]{Lemma}
		\newlist{lemenum}{enumerate}{1} 
		\setlist[lemenum]{thm}
		\crefalias{lemenumi}{lemma}

	\newtheorem{proposition}[theorem]{Proposition}
		\newlist{propenum}{enumerate}{1} 
		\setlist[propenum]{thm}
		\crefalias{propenumi}{proposition}

	\theoremstyle{definition}
	\newtheorem{remark}[theorem]{Remark}

	\usetikzlibrary{arrows.meta}
	\usetikzlibrary{backgrounds}
	\usepgfplotslibrary{patchplots}
	\usepgfplotslibrary{fillbetween}
	\usepgfplotslibrary{groupplots}

\title{PANOC-lite: A simpler and more efficient algorithm for composite minimization}

\author{%
	Alexander Bodard$^*$\and
	Pieter Pas$^*$\and
	Andreas Themelis$^\dagger$\and
	Panagiotis Patrinos$^*$\thanks{%
		This work was supported by: Research Foundation Flanders (FWO)
		PhD grant No. 11M9523N and research projects G081222N, G033822N,
		G0A0920N; Research Council KU Leuven C1 project No. C14/18/068; JSPS KAKENHI grant number JP24K20737.\newline
		$^*$ Department of Electrical Engineering (ESAT-STADIUS), KU Leuven, Kasteelpark Arenberg 10, 3001 Leuven, Belgium.\newline
		$^\dagger$ Faculty of Information Science and Electrical Engineering (ISEE),
		Kyushu University, 744 Motooka, Nishi-ku, 819-0395 Fukuoka, Japan.\newline
		{\it E-mails:}\hfill
		{\footnotesize\sf
			\{alexander.bodard,pieter.pas,panos.patrinos\}@kuleuven.be,\linebreak
			andreas.themelis@ees.kyushu-u.ac.jp%
		}%
	}%
}

\begin{document}
	\maketitle
	\thispagestyle{empty}
	\pagestyle{empty}


	\begin{abstract}
		This work introduces a simple and efficient linesearch method for composite minimization that accelerates proximal-gradient iterations with fast Newton-type directions.
Our algorithm is based on simple operations and only requires the standard proximal-gradient oracle, similar to PANOC and ZeroFPR, provided that the nonsmooth term is convex.
Noteworthy improvements include a cheaper backtracking procedure, in the sense that no additional gradients need to be evaluated, and an enlarged range of permitted stepsizes.
Global subsequential convergence and local superlinear convergence are established under conventional assumptions by considering a novel merit function which is less expensive to evaluate than alternatives like the forward-backward envelope.
Finally, the proposed approach is validated on model predictive control problems with collision avoidance constraints, as well as on the LIBSVM and CUTEst benchmarks.

	\end{abstract}

	\section{Introduction}
		\subsection{Problem statement and motivation}
	We consider nonconvex composite minimization problems
	\begin{equation*} \tag{P} \label{eq:problem-statement}
		\minimize_{x \in \R^n} \quad\varphi(x) \eqdef f(x) + g(x)
	\end{equation*}
	where \(f : \R^n \to \R\) is smooth, and \(g : \R^n \to \exR\) is convex and `proximable'.
	In particular, we assume the following:

	\begin{assumption}\label{assumption:basic}%
		The following hold in problem \eqref{eq:problem-statement}:
		\begin{assumenum}
			\item \label{assumption:basic-f}$f : \R^n \to \R$ is \(L_f\)-Lipschitz smooth;
			\item \label{assumption:basic-g}$g : \R^n \to \exR$ is proper, lsc and convex;
			\item \label{assumption:basic-minimizer}$\argmin \varphi \neq \emptyset$, i.e., a solution exists.
		\end{assumenum}
	\end{assumption}

	Composite objectives arise in various fields, like optimal control and machine learning.
	Prominent examples include constrained optimization, where \(g\) is the indicator of a closed convex set, and regularized problems, where \(g\) acts as a regularization term.
	More generally, nonlinear programs (NLPs) can also give rise to problems of the form \eqref{eq:problem-statement}:
	when solved using an augmented Lagrangian method (ALM), this requires solving a sequence of such composite subproblems.
	In the context of nonlinear model predictive control \cite{rawlings2017model}, this approach has been successfully applied, for example to problems with general state constraints \cite{pas2022alpaqa,sopasakis2020open}.

	A fundamental scheme in addressing \eqref{eq:problem-statement} is the \emph{proximal-gradient} (PG) method, with iterations of the form
	\begin{equation} \label{eq:pg}
		x^{k+1} = T_\gamma(x^k) \eqdef \prox_{\gamma g}(x^k - \gamma \nabla f(x^k))
	\end{equation}
	for some stepsize \(\gamma > 0\).
	This simple update rule serves as the basis for numerous accelerated variants, which either rely on Nesterov-type acceleration \cite{beck2009fast} or on fast Newton-type directions \cite{stella2017simple,themelis2018forward,ouyang2025trust,zeng2026linesearch}.
	However, ensuring global convergence of such methods often constitutes a significant challenge.
	A common approach is to design a merit function with suitable properties, and combine it with classical linesearch \cite{stella2017simple,themelis2018forward,zeng2026linesearch} or trust-region \cite{bodard2025second,ouyang2025trust} strategies.
	Initially proposed in \cite{patrinos2013proximal}, the forward-backward envelope (FBE) has been thoroughly analyzed and used as a merit function \cite{stella2017simple,themelis2018forward,bodard2025second}.
	Other surrogate functions give rise to alternative algorithms \cite{ouyang2025trust,zeng2026linesearch}.

	Somewhat remarkably, evaluating these existing merit functions not only requires function evaluations of $f$ and $g$, but also of $\nabla f$ and $\prox_{\gamma g}$.
	Therefore, every additional backtracking step of a linesearch procedure requires an additional evaluation of this first-order oracle, which is qualitatively different from the classical smooth setting.
	This raises the question whether we can design a merit function whose evaluation does not involve calls to $\nabla f$, which typically constitutes the most expensive operation.

\subsection{Contributions}
	To address this shortcoming,
	we propose a \emph{novel merit function} for composite objectives $\varphi \equiv f + g$ which, unlike existing alternatives, can be evaluated \emph{without gradient computations} $\nabla f$, and
	present a novel linesearch method (\cref{alg:zerofprpp}) that has a cheaper backtracking procedure and allows for larger stepsizes than state-of-the-art alternatives.
	This algorithm adopts the philosophy of \panoc{} \cite{stella2017simple}:%
	\footnote{Strictly speaking, the algorithm is closer to its predecessor \zerofpr{} \cite{themelis2018forward} in its way to compute update directions (see the overview in \cref{sec:background}). We nonetheless stick the more user-friendly name \panocpp.}
	despite only involving plain (nonscaled) proximal gradient operations,
	it attains global subsequential convergence and local superlinear convergence under conventional assumptions.
	Several numerical experiments on model predictive control problems with collision avoidance constraints, sparse logistic regression, and on \textsc{CUTEst} benchmarks demonstrate the performance of the proposed method.

\subsection{Notation}
	We denote by $\N$, $\R$ and $\exR = \R \cup \{ +\infty \}$ the set of natural, real and extended real numbers, respectively.
	The Euclidean inner product and norm are denoted by $\langle \cdot, \cdot \rangle$ and $\|\cdot\|$.
	By $\proj_C$ we denote the Euclidean projection on a set $C$.
	The set $\fix{T} \eqdef \{ x \in \R^n \mid x \in T(x) \}$ contains the fixed points of a set-valued operator $T : \R^n \rightrightarrows \R^n$ and $\zer T \eqdef \{x \mid 0 \in T(x)\}$ denotes its zeros.
	The proximal operator and Moreau envelope of a convex function $h : \R^n \to \exR$ with stepsize \(\gamma>0\) are $\prox_{\gamma h} (x) \eqdef \argmin_u \{ h(u) + \frac{1}{2\gamma} \norm{ u - x }^2 \}$ and $h^\gamma (x) \eqdef \inf_u \{ h(u) + \frac{1}{2\gamma} \norm{ u - x }^2 \}$, respectively.
	By \(\partial h\) we indicate the convex subdifferential of \(h\).
	We denote the set of $k$ times continuously differentiable functions by $\mathcal{C}^k$ and say that $f \in \mathcal{C}^1$ is $L_f$-smooth when $\nabla f$ is $L_f$-Lipschitz continuous.
    The pointwise Lipschitz modulus of \(h\) at \(x\in\R^n\) is \(\lip h(x)\coloneqq\limsup_{\substack{y,z\to x\\y\neq z}}\frac{\abs{h(y)-h(z)}}{\norm{x-z}}\).
	Otherwise, we adopt the notation from \cite{rockafellar1998variational}.

	\section{A novel merit function} \label{sec:merit-function}
		\subsection{Background and preliminaries}\label{sec:background}%
	Fixed-points of \eqref{eq:pg} are zeros of the \emph{natural residual}
	\begin{equation}
		\Rnat{\gamma}(x) := \gamma^{-1} (x - T_\gamma(x)).
	\end{equation}
	Methods like \panoc{} \cite{stella2017simple} and \zerofpr{} \cite{themelis2018forward} find such zeros through a linesearch procedure that uses the FBE \cite{patrinos2013proximal}
	\begin{equation*}
		\fbe(x) = \inf_{u \in \R^n} \{f(x) + \langle \nabla f(x), u - x \rangle + g(u) + \tfrac{1}{2\gamma} \norm{ u - x }^2\}
	\end{equation*}
	as a merit function, which can be explicitly expressed as
	\[
		\thinmuskip=.05\thinmuskip
		\medmuskip=.05\medmuskip
		\thickmuskip=.05\thickmuskip
		\fbe(x) = f(x) + g(T_\gamma(x)) - \gamma \langle \nabla f(x), \Rnat{\gamma}(x) \rangle + \tfrac{\gamma}{2} \norm{ \Rnat{\gamma}(x)}^2
	\]
	in terms of the natural residual \cite[Eq.\,(2.2)]{stella2017forward}.
	In particular, at every iteration, \zerofpr{}
	\begin{itemize}
		\item computes $\bar x = T_\gamma(x)$ and a `fast' direction $d \in \R^n$;
		\item constructs candidate points $x_{\rm cand}(\tau) \eqdef \bar x + \tau d$ with backtracking parameter $\tau \in (0, 1]$; and
		\item selects $x^+ = x_{\rm cand}(\tau)$ with $\tau$ such that, for some $\sigma > 0$, $\fbe(x_{\rm cand}(\tau)) - \fbe(x) \leq - \sigma \norm{ \Rnat{\gamma}(x) }^2$.
	\end{itemize}
	Three properties are crucial in ensuring well-definedness:
	\begin{enumerate}[label=(\roman*)]
		\item the point $\bar x = T_\gamma(x)$ yields \emph{sufficient decrease} on $\fbe$, specifically, $\fbe(\bar x) - \fbe(x) \leq - \gamma \frac{1 - \gamma L_f}{2} \norm{ \Rnat{\gamma}(x) }^2$;
		\item candidate points eventually approach this sufficient decrease point, i.e., $x_{\rm cand}(\tau) \to \bar x$ as $\tau \to 0$; and
		\item the FBE $\fbe$ is continuous, so that $\fbe(x_{\rm cand}(\tau)) \to \fbe(\bar x)$ as $\tau \to 0$.
	\end{enumerate}
	Properties (i) and (iii) are inherently related to the FBE, whereas (ii) follows by construction of the candidate points.
	\Panoc{} is similar in nature to \zerofpr{}, but it considers candidate points of the form $x_{\rm cand}(\tau) = (1-\tau) \bar x + \tau (x + d)$.
	While the algorithms are conceptually equivalent, this minor modification yields a cleaner update that saves one call to \(T_\gamma\) per iteration (albeit without necessarily reducing the total number of such calls compared to \zerofpr).
	We build specifically on the update rule of \zerofpr{}, as its structure is the one that enables fast asymptotic convergence in our setting.
	We nonetheless adopt the name `\panocpp*', as \panoc{} is more widely recognized and, as just noted, the two methods are conceptually equivalent.

	Other works \cite{ouyang2025trust,zeng2026linesearch} use the \emph{normal residual} \cite{robinson1992normal}
	\begin{equation}
		\Rnor{\gamma}(z) := \nabla f(\prox_{\gamma g} (z)) + \gamma^{-1} (z - \prox_{\gamma g}(z)),
	\end{equation}
	and propose algorithms involving the related merit function
	\begin{equation*}
		H_\tau(x) \eqdef \varphi(\prox_{\gamma g}(x)) + \tfrac{\tau \gamma}{2} \norm{ \Rnor{\gamma}(x) }^2 \text{ with }\tau \in (0, 1).
	\end{equation*}
	We emphasize that both the natural and normal residual characterize the first-order optimality conditions of \eqref{eq:problem-statement}.

	\begin{proposition}[{\cite[Lem.\,2.1]{ouyang2025trust}}]\label{prop:residuals}%
		Under \cref{assumption:basic},
		\begin{equation*}
			\prox_{\gamma g}(\zer \Rnor{\gamma}) \subseteq \zer \Rnat{\gamma} = \zer \partial \varphi = \zer(\nabla f + \partial g).
		\end{equation*}
	\end{proposition}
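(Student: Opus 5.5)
The plan is to prove the three set relations separately, using only the prox characterization for convex $g$ and the subdifferential sum rule. The key tool is the standard fact that, for proper lsc convex $g$ (\cref{assumption:basic-g}),
\begin{equation*}
	u = \prox_{\gamma g}(z) \iff \gamma^{-1}(z-u) \in \partial g(u),
\end{equation*}
which is the optimality condition of the strongly convex problem defining the proximal map. With it, every membership reduces to a subgradient inclusion.

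\emph{The equality $\zer \Rnat{\gamma} = \zer(\nabla f+\partial g)$.} We have $x\in\zer\Rnat{\gamma}$ iff $x = \prox_{\gamma g}(x-\gamma\nabla f(x))$. Taking $z = x-\gamma\nabla f(x)$ and $u=x$ in the characterization, this holds iff
\begin{equation*}
	\gamma^{-1}\bigl(x-\gamma\nabla f(x)-x\bigr) = -\nabla f(x)\in\partial g(x),
\end{equation*}
that is, iff $0\in\nabla f(x)+\partial g(x)$.

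\emph{The equality $\zer\partial\varphi = \zer(\nabla f+\partial g)$.} Here $\partial\varphi$ must mean the limiting subdifferential of \cite{rockafellar1998variational}, since $\varphi$ is nonconvex. The sum rule for a $\mathcal C^1$ function plus an lsc function gives $\partial\varphi = \nabla f+\partial g$ exactly \cite[Ex.\,8.8(c)]{rockafellar1998variational}. Since $g$ is convex, its limiting subdifferential coincides with the convex one \cite[Prop.\,8.12]{rockafellar1998variational}. Together these give the equality.

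\emph{The inclusion $\prox_{\gamma g}(\zer\Rnor{\gamma}) \subseteq \zer\Rnat{\gamma}$.} Let $z\in\zer\Rnor{\gamma}$ and set $x=\prox_{\gamma g}(z)$. The condition $\Rnor{\gamma}(z)=0$ reads $\gamma^{-1}(z-x) = -\nabla f(x)$, while the prox characterization gives $\gamma^{-1}(z-x)\in\partial g(x)$. Hence $-\nabla f(x)\in\partial g(x)$, and by the first equality $x\in\zer\Rnat{\gamma}$.

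The inclusion is in fact an equality: for $x\in\zer\Rnat{\gamma}$, the point $z=x-\gamma\nabla f(x)$ satisfies $\prox_{\gamma g}(z)=x$ and $\Rnor{\gamma}(z)=0$. The statement only asks for the inclusion, so this is not needed.

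None of these steps is hard. The only point requiring care is the nonconvex sum rule for $\partial\varphi$ and making clear which subdifferential is meant; I would simply cite the sum rule.
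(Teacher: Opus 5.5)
Your proof is correct. The paper does not prove this proposition itself; it imports it from \cite[Lem.\,2.1]{ouyang2025trust}. Your argument is therefore a self-contained replacement for a citation, not a variant of an in-paper proof.

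Each step checks out. The prox optimality condition $u=\prox_{\gamma g}(z)\iff\gamma^{-1}(z-u)\in\partial g(u)$ is an equivalence because $g$ is convex. It reduces membership in both $\zer\Rnat{\gamma}$ and $\prox_{\gamma g}(\zer\Rnor{\gamma})$ to the single inclusion $-\nabla f(x)\in\partial g(x)$. The middle equality needs exactly two facts: the smooth-plus-lsc sum rule, and the coincidence of limiting and convex subdifferentials for convex $g$.

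Your direct route gives two things the bare citation does not. First, it makes explicit that $\partial\varphi$ must be read as the limiting (or regular) subdifferential. Under the paper's literal notational convention (the convex subdifferential), $\zer\partial\varphi$ would be the set of global minimizers of $\varphi$, and the middle equality would fail for nonconvex $f$. You were right to flag this.

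Second, you obtain the reverse inclusion via $z=x-\gamma\nabla f(x)$, so that $\prox_{\gamma g}(\zer\Rnor{\gamma})=\zer\Rnat{\gamma}$. Keep this remark rather than dismissing it as unneeded: the paper tacitly relies on it. In the Dennis--Mor\'e theorem, $\Rnor{\gamma}(\bar x^k)\to 0$ is derived from $x^\star\in\fix T_\gamma$. That step requires $\Rnor{\gamma}(\bar x^\star)=0$ for $\bar x^\star=x^\star-\gamma\nabla f(x^\star)$, which is exactly your converse direction.
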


	Since merit functions are evaluated at every candidate point, their computational cost may significantly impact the overall efficiency of the method.
	The evaluation of \(\fbe\) and \(H_\tau\) requires the computation of \(\Rnat{\gamma}\) or \(\Rnor{\gamma}\), respectively, and hence the computation of \(\nabla f\) and \(\prox_{\gamma g}\).
	The following section proposes an alternative merit function that avoids these gradients $\nabla f$, thus considerably reducing the computational cost of an individual backtracking step.

\subsection{A novel merit function and its properties}
	Inspired by \(H_\tau\), we propose the merit function $\psi_\gamma := \varphi \circ \prox_{\gamma g}$, which notably does not involve gradients \(\nabla f\).
	We now present a sufficient decrease condition involving $\psi_\gamma$ and establish (strict) continuity of $\psi_\gamma$, thus mirroring similar properties of $\fbe$ that have proven crucial in ensuring well-definedness of \Panoc{} and \zerofpr{}.

	First, recall the cost decrease by a single PG iteration.

	\begin{lemma}[Cost decrease {\cite[Rem. 4(iii)]{bolte2014proximal}}]\label{lem:cost-decrease}%
		If \cref{assumption:basic} holds, then for any $\gamma > 0$ and any \( x \in \R^n \) we have
		\begin{equation}
			\varphi(T_\gamma(x)) - \varphi(x) \leq - \gamma \tfrac{2 - \gamma L_f}{2} \norm{ \Rnat{\gamma}(x) }^2.
		\end{equation}
	\end{lemma}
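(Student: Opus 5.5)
The plan is to combine the descent lemma for \(f\) with the variational characterization of \(\bar x \eqdef T_\gamma(x)\) as the minimizer of a strongly convex model. Write \(r \eqdef \Rnat{\gamma}(x)\), so that \(\bar x = x - \gamma r\).

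First, \cref{assumption:basic-f} gives the standard upper bound
\[
	f(\bar x) \leq f(x) + \langle \nabla f(x), \bar x - x\rangle + \tfrac{L_f}{2}\norm{\bar x - x}^2 .
\]

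Second, by definition of the proximal operator, \(\bar x\) minimizes the function
\[
	u \mapsto g(u) + \tfrac{1}{2\gamma}\norm{u - x + \gamma\nabla f(x)}^2 ,
\]
which is \(\tfrac1\gamma\)-strongly convex because \(g\) is convex by \cref{assumption:basic-g}. Two routes are available here.
\begin{itemize}
	\item The optimality condition \(\tfrac1\gamma(x-\bar x) - \nabla f(x) \in \partial g(\bar x)\), combined with the subgradient inequality evaluated at \(u=x\), gives
	\[
		g(x) \geq g(\bar x) + \langle \tfrac1\gamma(x-\bar x) - \nabla f(x),\, x - \bar x\rangle .
	\]
	\item Equivalently, one can compare the objective at \(\bar x\) and at \(x\) and use strong convexity to pick up an extra \(\tfrac{1}{2\gamma}\norm{x-\bar x}^2\).
\end{itemize}
The subgradient route is the cleanest. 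It yields
\[
	g(\bar x) \leq g(x) - \langle \nabla f(x), \bar x - x\rangle - \tfrac1\gamma\norm{\bar x - x}^2 .
\]

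Third, add the two inequalities. The inner-product terms \(\langle\nabla f(x),\bar x - x\rangle\) cancel, leaving
\[
	\varphi(\bar x) - \varphi(x) \leq -\bigl(\tfrac1\gamma - \tfrac{L_f}{2}\bigr)\norm{\bar x - x}^2 .
\]
Substituting \(\norm{\bar x - x}^2 = \gamma^2\norm{r}^2\) gives exactly \(-\gamma\tfrac{2-\gamma L_f}{2}\norm{r}^2\).

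One technicality is \(x\notin\operatorname{dom} g\). In that case \(\varphi(x)=+\infty\) and the inequality holds trivially, with the convention that the left-hand side is \(-\infty\). Note also that \(\bar x \in \operatorname{dom}\partial g \subseteq \operatorname{dom} g\), so \(\varphi(\bar x)\) is finite. So we may assume \(x\in\operatorname{dom} g\).

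There is no real obstacle. The only point needing care is to use convexity of \(g\), via the subgradient inequality at \(\bar x\), rather than mere minimality of the prox objective. Minimality alone would give the weaker factor \(\tfrac{1-\gamma L_f}{2}\) instead of \(\tfrac{2-\gamma L_f}{2}\).
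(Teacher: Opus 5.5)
Your proof is correct for every $\gamma>0$: the descent lemma for $f$ plus the subgradient inequality at $\bar x$ gives $\varphi(\bar x)-\varphi(x)\leq-\bigl(\tfrac1\gamma-\tfrac{L_f}{2}\bigr)\norm{\bar x-x}^2$, and your handling of $x\notin\operatorname{dom}g$ is fine. The paper does not prove the lemma itself but cites Remark~4(iii) of Bolte--Sabach--Teboulle, and your argument is the standard one behind that remark: convexity of $g$ upgrades the constant $\tfrac{1}{2\gamma}-\tfrac{L_f}{2}$, which minimality alone gives, to $\tfrac1\gamma-\tfrac{L_f}{2}$.
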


	Applying \cref{lem:cost-decrease} to a point \( \hat x = \prox_{\gamma g}(x) \) immediately yields a sufficient decrease condition for \( \psi_\gamma \), i.e.,
	\begin{equation} \label{eq:psi-decrease}
		\psi_\gamma(\hat x - \gamma \nabla f(\hat x)) - \psi_\gamma(x) \leq - \gamma \tfrac{2 - \gamma L_f}{2} \norm{ \Rnat{\gamma}(\hat x) }^2.
	\end{equation}

	Second, we establish strict continuity of \(\psi_\gamma\).

	\begin{lemma}\label{lem:psi-Lipschitz}%
		If \cref{assumption:basic-g} holds, then
		$g \circ \prox_{\gamma g}$ is strictly continuous for any $\gamma > 0$, with
		\begin{equation*}
			\lip[ g \circ \prox_{\gamma g}](x) \leq \tfrac{1}{\gamma} \norm{ x - \prox_{\gamma g}(x) }.
		\end{equation*}
	\end{lemma}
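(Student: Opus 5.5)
The plan is to write \(g\circ\prox_{\gamma g}\) in terms of smooth objects built from the Moreau envelope. Set \(p(x)\eqdef\prox_{\gamma g}(x)\). By definition of the Moreau envelope,
\[
g^\gamma(x) = g(p(x)) + \tfrac{1}{2\gamma}\norm{x-p(x)}^2 ,
\]
so that
\[
g\circ\prox_{\gamma g} = g^\gamma - \tfrac{1}{2\gamma}\norm{\cdot-p(\cdot)}^2 .
\]
Since \(g\) is proper, lsc and convex (\cref{assumption:basic-g}), standard facts apply. First, \(g^\gamma\) is \(\mathcal C^1\) with \(\nabla g^\gamma(x)=\gamma^{-1}(x-p(x))\), and this gradient is \(\gamma^{-1}\)-Lipschitz. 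Second, \(p\) and \(\id-p\) are firmly nonexpansive, hence \(1\)-Lipschitz. Consequently both terms on the right-hand side are locally Lipschitz, so \(g\circ\prox_{\gamma g}\) is real-valued and strictly continuous.

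For the modulus estimate, I will bound the difference quotient directly rather than summing the moduli of the two terms, since summing would lose the sharp constant. Fix \(y,z\) near \(x\) and write \(u=y-p(y)\) and \(v=z-p(z)\).

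The first ingredient is a two-sided bound on the envelope increment. Convexity of \(g^\gamma\) gives
\[
g^\gamma(y)-g^\gamma(z)\le\gamma^{-1}\langle u,y-z\rangle ,
\]
and swapping the roles of \(y\) and \(z\) gives the matching lower bound with \(v\) in place of \(u\).

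The second ingredient is the quadratic increment, which factors as
\[
\tfrac1{2\gamma}\bigl(\norm{u}^2-\norm{v}^2\bigr)=\tfrac1{2\gamma}\langle u+v,u-v\rangle .
\]

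Combining the two, I obtain
\[
g(p(y))-g(p(z))\le \gamma^{-1}\langle u,y-z\rangle-\tfrac1{2\gamma}\langle u+v,u-v\rangle .
\]
Using \(\norm{u-v}\le\norm{y-z}\), the right-hand side is bounded by \(\gamma^{-1}\bigl(\norm{u}+\tfrac12\norm{u+v}\bigr)\norm{y-z}\). This is cruder than wanted: as \(y,z\to x\) it tends to \(\tfrac{2}{\gamma}\norm{x-p(x)}\), not \(\tfrac1\gamma\norm{x-p(x)}\).

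The main obstacle is therefore getting the sharp constant. My first attempt will be to regroup the terms:
\[
\langle u,y-z\rangle-\tfrac12\langle u+v,u-v\rangle
=\langle u,(y-z)-(u-v)\rangle+\tfrac12\norm{u-v}^2 .
\]
Note that \((y-z)-(u-v)=p(y)-p(z)\). The quadratic remainder \(\tfrac12\norm{u-v}^2\le\tfrac12\norm{y-z}^2\) is \(o(\norm{y-z})\), so it vanishes in the \(\limsup\). This leaves the estimate
\[
g(p(y))-g(p(z))\le\gamma^{-1}\norm{u}\,\norm{p(y)-p(z)}+o(\norm{y-z})\le\gamma^{-1}\norm{u}\,\norm{y-z}+o(\norm{y-z}),
\]
where the last step uses nonexpansiveness of \(p\). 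By symmetry, the same bound holds with \(v\) in place of \(u\) for the reverse difference.

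Dividing by \(\norm{y-z}\) and letting \(y,z\to x\), continuity of \(u=y-p(y)\) (and of \(v\)) gives
\[
\lip[g\circ\prox_{\gamma g}](x)\le\gamma^{-1}\norm{x-\prox_{\gamma g}(x)} .
\]
As a sanity check, the same regrouping could be derived via the subgradient inequality \(\gamma^{-1}u\in\partial g(p(y))\) applied at \(p(z)\), which yields the same inequality directly.
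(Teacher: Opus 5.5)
Your proof is correct, but the key inequality differs from the paper's. Both arguments start from $g\circ\prox_{\gamma g}=g^\gamma-\tfrac{1}{2\gamma}\norm{\cdot-\prox_{\gamma g}(\cdot)}^2$. The paper then bounds the envelope increment from above with the descent lemma at $x_2$, using $\gamma^{-1}$-Lipschitz continuity of $\nabla g^\gamma$, and arrives at $\tfrac{1}{2\gamma}\norm{\bar x_2-x_1}^2-\tfrac{1}{2\gamma}\norm{\bar x_1-x_1}^2$. It then applies Young's inequality with the tuned parameter $\varepsilon=\norm{x_1-x_2}/\norm{x_1-\bar x_1}$, which is undefined when $x_1=\bar x_1$. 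You instead use the convexity (gradient) inequality of $g^\gamma$ at $y$ together with an exact regrouping. This gives $\gamma^{-1}\langle u,p(y)-p(z)\rangle+\tfrac{1}{2\gamma}\norm{u-v}^2$, so only Cauchy--Schwarz and nonexpansiveness are needed, with no parameter tuning and no degenerate case. Expanding the paper's square shows its bound equals $\gamma^{-1}\langle x_1-\bar x_1,\bar x_1-\bar x_2\rangle+\tfrac{1}{2\gamma}\norm{\bar x_1-\bar x_2}^2$, so the two routes share the same leading term and differ only in the vanishing remainder.

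Your closing remark is sharper than you say. Since $\gamma^{-1}(y-p(y))\in\partial g(p(y))$, the subgradient inequality gives $g(p(y))-g(p(z))\le\gamma^{-1}\langle y-p(y),p(y)-p(z)\rangle$ with no quadratic remainder. Hence $\abs{g(p(y))-g(p(z))}\le\gamma^{-1}\max\{\norm{y-p(y)},\norm{z-p(z)}\}\norm{y-z}$. This one-line estimate gives both strict continuity, with an explicit local Lipschitz constant, and the modulus bound, without needing the Moreau-envelope decomposition. The paper's envelope route mainly has the advantage of using the same smoothness tool as its appendix lemma.
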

	\begin{proof}
		For \(x_i\in\R^n\) let \(\bar x_i\coloneqq\prox_{\gamma g}(x_i)\), \(i=1,2\).
		Then,
		\begin{align*}
		&
			g\circ\prox_{\gamma g}(x_1)
			-
			g\circ\prox_{\gamma g}(x_2)
		\\
		={} &
			g^\gamma(x_1)
			-
			g^\gamma(x_2)
			-
			\tfrac{1}{2\gamma}
			\norm{\bar x_1-x_1}^2
			+
			\tfrac{1}{2\gamma}
			\norm{\bar x_2-x_2}^2\\
		\leq{} &
			\langle\nabla g^\gamma(x_2),x_1-x_2\rangle
			+
			\tfrac{1}{2\gamma}
			\norm{x_1-x_2}^2
		\\
		&
			-
			\tfrac{1}{2\gamma}
			\norm{\bar x_1-x_1}^2
			+
			\tfrac{1}{2\gamma}
			\norm{\bar x_2-x_2}^2
		\\
		={} &
			\tfrac{1}{\gamma}
			\langle x_2-\bar x_2,x_1-x_2\rangle
			+
			\tfrac{1}{2\gamma}
			\norm{x_1-x_2}^2
		\\
		&
			-
			\tfrac{1}{2\gamma}
			\norm{\bar x_1-x_1}^2
			+
			\tfrac{1}{2\gamma}
			\norm{\bar x_2-x_2}^2
		\\
		={} &
			\tfrac{1}{2\gamma}
			\norm{\bar x_2-x_1}^2
			-
			\tfrac{1}{2\gamma}
			\norm{\bar x_1-x_1}^2
		\\
		\leq{} &
			\tfrac{1+\frac{1}{\varepsilon}}{2\gamma}
			\norm{\bar x_2-\bar x_1}^2
			+
			\tfrac{\varepsilon}{2\gamma}
			\norm{\bar x_1-x_1}^2
		\\
		\leq{} &
			\tfrac{1+\frac{1}{\varepsilon}}{2\gamma}
			\norm{x_2-x_1}^2
			+
			\tfrac{\varepsilon}{2\gamma}
			\norm{\bar x_1-x_1}^2,
		\end{align*}
		where the consecutive inequalities follow by \(\frac{1}{\gamma}\)-Lipschitz differentiability of \(g^\gamma\), Young's inequality, and nonexpansiveness of \(\prox_{\gamma g}\), respectively.
		With \(\varepsilon=\frac{\norm{x_1-x_2}}{\norm{x_1-\bar x_1}}\) we obtain
		\[
			\tfrac{
				g\circ\prox_{\gamma g}(x_1)
				-
				g\circ\prox_{\gamma g}(x_2)
			}{
				\norm{x_2-x_1}
			}
		\leq
			\tfrac{1}{2\gamma}
			\norm{x_2-x_1}
			+
			\tfrac{1}{\gamma}
			\norm{\bar x_1-x_1},
		\]
		and letting \(x_2,x_1\to x\) yields the claimed bound.
	\end{proof}

	\begin{corollary}\label{lem:psi-gamma-Lipschitz}%
		If \cref{assumption:basic-g} holds and if $f : \R^n \to \R$ is $\C^1$ on $\R^n$, then $\psi_\gamma := \varphi \circ \prox_{\gamma g}$ is strictly continuous for any $\gamma > 0$, with
		\begin{equation*}
			\lip \psi_\gamma(x) \leq \norm{\nabla f(\prox_{\gamma g}(x))} + \tfrac{1}{\gamma}\norm{x - \prox_{\gamma g}(x)}.
		\end{equation*}
	\end{corollary}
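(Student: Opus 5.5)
The plan is to split \(\psi_\gamma = f\circ\prox_{\gamma g} + g\circ\prox_{\gamma g}\) and bound the pointwise Lipschitz modulus of each summand separately. The second summand is handled entirely by \cref{lem:psi-Lipschitz}: it is strictly continuous with \(\lip[g\circ\prox_{\gamma g}](x)\le\frac1\gamma\norm{x-\prox_{\gamma g}(x)}\).

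For the first summand, we use that \(\prox_{\gamma g}\) is nonexpansive, since \(g\) is proper, lsc and convex, and that \(f\in\C^1\).

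Let \(y,z\to x\) with \(y\neq z\), and set \(\bar y=\prox_{\gamma g}(y)\), \(\bar z=\prox_{\gamma g}(z)\), \(\bar x=\prox_{\gamma g}(x)\). By the mean value theorem there is \(\xi\) on the segment \([\bar y,\bar z]\) with
\[
	\abs{f(\bar y)-f(\bar z)}
	=\abs{\langle\nabla f(\xi),\bar y-\bar z\rangle}
	\le\norm{\nabla f(\xi)}\,\norm{y-z}.
\]
Nonexpansiveness gives \(\bar y,\bar z\to\bar x\), hence \(\xi\to\bar x\). Continuity of \(\nabla f\) then yields \(\norm{\nabla f(\xi)}\to\norm{\nabla f(\bar x)}\). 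Dividing by \(\norm{y-z}\) and taking the limsup, we get \(\lip[f\circ\prox_{\gamma g}](x)\le\norm{\nabla f(\prox_{\gamma g}(x))}\), which is finite. So \(f\circ\prox_{\gamma g}\) is strictly continuous.

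Finally, the pointwise Lipschitz modulus is subadditive: the difference quotient of a sum is at most the sum of the difference quotients, and \(\limsup(a+b)\le\limsup a+\limsup b\). Adding the two bounds gives the claimed estimate. Since both moduli are finite at every \(x\), \(\psi_\gamma\) is strictly continuous.

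No step is a real obstacle, as \cref{lem:psi-Lipschitz} does the heavy lifting. The only points needing care are that \(\psi_\gamma\) is real-valued everywhere, because \(\prox_{\gamma g}(x)\in\operatorname{dom}g\), and that the mean-value point \(\xi\) converges to \(\bar x\).
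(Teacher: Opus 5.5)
Your proposal is correct and matches the argument the paper leaves implicit by stating this as a corollary of \cref{lem:psi-Lipschitz}: split \(\psi_\gamma = f\circ\prox_{\gamma g} + g\circ\prox_{\gamma g}\), bound \(\lip[f\circ\prox_{\gamma g}](x)\le\norm{\nabla f(\prox_{\gamma g}(x))}\) using nonexpansiveness of \(\prox_{\gamma g}\) and continuity of \(\nabla f\), and add the two moduli by subadditivity of \(\lip\). Your mean-value estimate and your observation that \(\psi_\gamma\) is finite-valued, because \(\prox_{\gamma g}\) maps into \(\operatorname{dom} g\), supply the details the paper omits.
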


	\section{Algorithm}
		\subsection{Overview}\label{sec:overview}%
	We now introduce \cref{alg:zerofprpp}, a variant of \zerofpr{} \cite{themelis2018forward} based on the novel merit function $\psi_\gamma$.
	Recall that for a given iterate $x^k$, \zerofpr{} considers candidate points of the form
	\begin{equation*}
		x_{\rm cand}^{k+1}(\tau) \eqdef \bar x^k + \tau d^k, \qquad \bar x^k = T_\gamma(x^k),
	\end{equation*}
	where \(\tau > 0\) is a backtracking parameter.
	A candidate point $x_{\rm cand}^{k+1}(\tau)$ is accepted by the linesearch procedure whenever a sufficient decrease condition on the FBE is satisfied.
	The sufficient decrease condition ensures global convergence, and the proximal-gradient step and continuity of the FBE guarantee that the condition holds for sufficiently small values of \(\tau\).
	\Cref{alg:zerofprpp} follows a similar strategy, but instead verifies descent directly on the objective \(\varphi\).
	The key idea is that \(\varphi\) is evaluated \emph{only at points of the form \(\prox_{\gamma g}(x)\)}.
	As such, we implicitly work with the merit function \(\psi_\gamma\), and the linesearch condition \eqref{eq:linesearch-zerofprpp} becomes equivalent to
	\begin{equation}\label{eq:linesearch-zerofprpp-psi}
		\psi_\gamma(x^{k+1}) - \psi_\gamma(x^k) \leq - \sigma \norm{ \Rnat{\gamma}(\hat x^k) }^2.
	\end{equation}
	By leveraging the previously derived sufficient decrease condition \eqref{eq:psi-decrease} and the continuity of $\psi_\gamma$ (\cref{lem:psi-gamma-Lipschitz}), \cref{sec:convergence} establishes well-definedness and global convergence using similar arguments as for \zerofpr{} (\cref{thm:zerofpr:subseq}).
	\begin{algorithm}[H]
		\small
		\caption{\panocpp*}%
		\label{alg:zerofprpp}%
		\begin{algorithmic}[1]
			\Require{$x^0 \in \R^n, \gamma \in (0, \nicefrac{2}{L_f}), \beta \in (0, 1), \sigma \in (0, \gamma \frac{2 - \gamma L_f}{2})$;}
			\Initialize $\hat x^0 = \prox_{\gamma g}(x^0)$;
			\For{$k = 1, 2 \dots$}
				\State $\bar x^k = \hat x^k - \gamma \nabla f(\hat x^k)$;
				\State $\Rnat{\gamma}(\hat x^k) = \gamma^{-1} (\hat x^k - \prox_{\gamma g}(\bar x^k))$;
				\State
					Select a direction $\bar d^k \in \R^n$ at \( \bar x^k \) and let
					\begin{equation*}
						\begin{aligned}
							&x^{k+1} = \bar x^k + \tau_k \bar d^k\\
							&\hat x^{k+1}  =\prox_{\gamma g}(x^{k+1})
						\end{aligned}
					\end{equation*}
					\hspace*{\algorithmicindent}%
					where $\tau_k$ is the largest in $\left\{\beta^i \mid i \in \N \right\}$ such that
					\begin{equation} \label{eq:linesearch-zerofprpp}
						\varphi(\hat x^{k+1}) - \varphi(\hat x^k) \leq - \sigma \norm{ \Rnat{\gamma}(\hat x^k) }^2.
					\end{equation}
			\EndFor
		\end{algorithmic}
	\end{algorithm}

	Sufficiently close to a strong local minimum, well-chosen directions \(\bar d^k\) ensure that the unit stepsize \(\tau_k = 1\) is accepted, in which case \(x^{k+1} = \bar x^k + \bar d^k\).
	In particular, \cref{sec:convergence} formally proves a local superlinear convergence rate under a mild differentiability assumption on $\Rnor{\gamma}$ at the limit point $\bar x^\star$ (which we will see is equivalent to a similar assumption used by \zerofpr{}), and under a classical Dennis-Mor\'e condition
	\begin{equation} \label{eq:dennis-more}
		\lim_{k \to \infty} \frac{\norm{ \Rnor{\gamma}(\bar x^k) + \jac{\Rnor{\gamma}}(\bar x^\star) \bar d^k }}{\norm{ \bar d^k }} = 0.
	\end{equation}
	This motivates Newton-type directions of the form
	\begin{equation} \label{eq:newton-direction}
		\bar d^k = - H_k \Rnor{\gamma}(\bar x^k),
	\end{equation}
	where $H_k$ is an invertible operator that, ideally, captures curvature information of $\Rnor{\gamma}$.
	Practical quasi-Newton schemes update a \emph{linear} operator $H_k$ recursively in such a way that an inverse secant condition holds:
	\begin{equation*}
		\bar x^{k+1} - \bar x^k = H_{k+1} \left( \Rnor{\gamma}(\bar x^{k+1}) - \Rnor{\gamma}(\bar x^k) \right).
	\end{equation*}

	We emphasize that the combination of natural residuals (in the linesearch condition) and normal residuals (for the directions \(\bar d^k\)) is a distinctive feature of \cref{alg:zerofprpp}.

	\begin{remark}[Cheaper backtracking]
		At every iteration, \cref{alg:zerofprpp} requires one gradient \(\nabla f\) to compute \(\bar x^k\), and another gradient \(\nabla f\) to compute \(\Rnor{\gamma}(\bar x^k)\), which is needed for Newton-type directions \eqref{eq:newton-direction}.
		Hence, \cref{alg:zerofprpp} needs exactly \emph{two} gradients per iteration, regardless of the number of backtracking steps.
		In contrast, \zerofpr{} and \panoc{} perform an additional evaluation of \(\nabla f\) every time the linesearch procedure backtracks, as the FBE is re-evaluated.
	\end{remark}

	\begin{remark}[Larger stepsizes]\label{rem:larger-stepsizes}%
		Global convergence guarantees for \cref{alg:zerofprpp} hold if the stepsize $\gamma$ satisfies \(\gamma \in (0, \nicefrac{2}{L_f})\).
		This notably improves upon FBE-based methods which instead require smaller stepsizes \(\gamma \in (0, \nicefrac{1}{L_f})\).
	\end{remark}

	\begin{remark}[Adaptive variant]\label{rem:adaptive}%
		An unknown Lipschitz constant $L_f$ can be estimated adaptively using a similar procedure as described in \cite[Rem. 5.2]{themelis2018forward}.
		Fix a ratio \(\alpha \in (0, 1)\), an initial estimate $L > 0$, stepsize $\gamma \in (0, \nicefrac{2}{L})$, and parameter \(\sigma \in (0, \gamma \frac{2 - \gamma L}{2})\).
		At every iteration, after computing $\Rnat{\gamma}(\hat x^k)$, verify the Lipschitz upper bound between the points \(\hat x^k\) and $y^k \eqdef T_\gamma(\hat x^k)$.
		If violated, i.e., if
		\begin{equation*}
			f(y^k) > f(\hat x^k) - \langle \nabla f(\hat x^k), \hat x^k - y^k \rangle + \tfrac{L}{2} \norm{ \hat x^k - y^k }^2,
		\end{equation*}
		then update $\gamma \leftarrow \alpha \gamma$, $L \leftarrow \nicefrac{L}{\alpha}$, and $\sigma \leftarrow \alpha \sigma$ and restart that iteration.
		This can only happen a finite number of times, after which $L \geq L_f$.
		From then on $\gamma$ and $\sigma$ are also constant, and the sufficient decrease derived from \cref{lem:cost-decrease} still holds.
		In particular, all convergence results remain valid.
	\end{remark}

\subsection{Convergence analysis}\label{sec:convergence}%
	We now establish the well-definedness of \panocpp{} and use the sufficient decrease condition \eqref{eq:linesearch-zerofprpp} to show convergence.

	\begin{theorem}[Well-definedness]\label{thm:zerofpr:well-defined}%
		Suppose that \cref{assumption:basic} holds.
		Then \panocpp{} is well defined: every iteration terminates after a finite number of backtracking steps.
	\end{theorem}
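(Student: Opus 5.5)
The argument is by contradiction, using the two ingredients of \cref{sec:merit-function}: the sufficient decrease \eqref{eq:psi-decrease} at \(\tau=0\), and the continuity of \(\psi_\gamma\) (\cref{lem:psi-gamma-Lipschitz}). Fix an iteration \(k\) and assume \(\varphi(\hat x^k)<\infty\). For \(k=0\) this holds since \(\hat x^0=\prox_{\gamma g}(x^0)\in\operatorname{dom} g\). For later iterations it holds inductively, because an accepted step only decreases \(\varphi\) along \((\hat x^k)\).

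Write \(x_\tau\coloneqq\bar x^k+\tau\bar d^k\). The quantities \(\bar x^k=\hat x^k-\gamma\nabla f(\hat x^k)\), \(\Rnat{\gamma}(\hat x^k)\) and \(\bar d^k\) are fixed before the linesearch, so only \(\tau\) varies. The test \eqref{eq:linesearch-zerofprpp} then reads
\[
	\psi_\gamma(x_\tau)-\varphi(\hat x^k)\leq-\sigma\norm{\Rnat{\gamma}(\hat x^k)}^2 .
\]

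\textbf{Step 1 (limit point, \(\tau=0\)).} Since \(\prox_{\gamma g}(\bar x^k)=T_\gamma(\hat x^k)\), \cref{lem:cost-decrease} applied at \(\hat x^k\) gives
\[
	\psi_\gamma(x_0)-\varphi(\hat x^k)\leq-\gamma\tfrac{2-\gamma L_f}{2}\norm{\Rnat{\gamma}(\hat x^k)}^2 .
\]
This is \eqref{eq:psi-decrease} with \(x=x^k\), and it only uses \(\varphi(\hat x^k)=\psi_\gamma(x^k)\).

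\textbf{Step 2 (continuity).} By \cref{lem:psi-gamma-Lipschitz}, \(\psi_\gamma\) is real-valued and continuous on \(\R^n\): \(f\) is \(\C^1\), and \(g\circ\prox_{\gamma g}\) is finite since \(\prox_{\gamma g}\) maps into \(\operatorname{dom} g\). As \(x_\tau\to x_0=\bar x^k\) when \(\tau\to0\), we get \(\psi_\gamma(x_\tau)\to\psi_\gamma(\bar x^k)\).

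\textbf{Step 3 (conclusion).} Consider two cases.
\begin{itemize}
	\item If \(\Rnat{\gamma}(\hat x^k)\neq0\), then \(\sigma<\gamma\frac{2-\gamma L_f}{2}\) makes the inequality of Step 1 strict with a positive margin \(\bigl(\gamma\frac{2-\gamma L_f}{2}-\sigma\bigr)\norm{\Rnat{\gamma}(\hat x^k)}^2\). By Step 2, for all sufficiently large \(i\) the point \(x_{\beta^i}\) satisfies the test, so \(\tau_k\) exists.
	\item If \(\Rnat{\gamma}(\hat x^k)=0\), then \(\hat x^k\) is stationary by \cref{prop:residuals}. Either the algorithm stops there (the standard convention), or we argue as follows. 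The test requires \(\psi_\gamma(x_\tau)\leq\varphi(\hat x^k)\), and Step 1 gives \(\psi_\gamma(x_0)\leq\varphi(\hat x^k)\). This non-strict inequality may fail for every \(\tau>0\), so a termination criterion \(\Rnat{\gamma}(\hat x^k)=0\) is needed.
\end{itemize}
The main subtlety is this degenerate case, together with the bookkeeping that \(\varphi(\hat x^k)\) stays finite. Everything else reduces to combining \eqref{eq:psi-decrease} with continuity of \(\psi_\gamma\).
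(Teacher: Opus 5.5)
Your proof is correct and follows the paper's argument: the sufficient decrease \eqref{eq:psi-decrease} at $\tau=0$, continuity of $\psi_\gamma$ from \cref{lem:psi-gamma-Lipschitz}, and the strict margin provided by $\sigma<\gamma\frac{2-\gamma L_f}{2}$. Your separate treatment of the case $\Rnat{\gamma}(\hat x^k)=0$ is a valid refinement that the paper's proof silently skips. In that case the margin vanishes, and for an arbitrary $\bar d^k$ the test can indeed fail for every $\tau>0$; however, Newton-type directions \eqref{eq:newton-direction} give $\bar d^k=0$ at such points, because $\Rnor{\gamma}(\bar x^k)=0$ there.
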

	\begin{proof}
		We know from \eqref{eq:psi-decrease} that
		\(
			\psi_\gamma(\bar x^k) - \psi_\gamma(x^k) \leq - \gamma \tfrac{2 - \gamma L_f}{2} \norm{ \Rnat{\gamma}(\hat x^k) }^2.
		\) 
		Since \(x^{k+1}\to\bar x^k\) as \(\tau_k\to0\), and due to continuity of \(\psi_\gamma\) (\cref{lem:psi-gamma-Lipschitz}) and the fact that \(\sigma \in (0, \gamma \tfrac{2 - \gamma L_f}{2}) \), \eqref{eq:linesearch-zerofprpp-psi} holds for \(\tau_k\) small enough.
	\end{proof}

	\begin{theorem}[Global subsequential convergence]\label{thm:zerofpr:subseq}%
		Under \cref{assumption:basic}, a sequence \(\hat x^k\) generated by \panocpp{} satisfies:
		\begin{lemenum}
		\item
			\(\seq{\norm{\Rnat{\gamma}(\hat x^k)}^2}\) has finite sum;
		\item
			any limit point \(\hat x^\star\) of \(\seq{\hat x^k}\) satisfies \(\hat x^\star \in \zer \Rnat{\gamma}\);
		\item
			if \(\varphi\) is level bounded, then \(\seq{\hat x^k}\) is bounded.
		\end{lemenum}
	\end{theorem}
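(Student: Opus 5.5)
The plan is to telescope the linesearch condition \eqref{eq:linesearch-zerofprpp}. By \cref{thm:zerofpr:well-defined}, every iteration terminates, so \eqref{eq:linesearch-zerofprpp} holds for all \(k\). Summing from \(k=0\) to \(K\) gives
\begin{equation*}
	\sigma\sum_{k=0}^{K}\norm{\Rnat{\gamma}(\hat x^k)}^2 \leq \varphi(\hat x^0)-\varphi(\hat x^{K+1}) \leq \varphi(\hat x^0)-\inf\varphi .
\end{equation*}
The right-hand side is finite. Indeed, \(\inf\varphi>-\infty\) by \cref{assumption:basic-minimizer}, and \(\varphi(\hat x^0)<\infty\) because \(\hat x^0=\prox_{\gamma g}(x^0)\in\operatorname{dom} g\) and \(f\) is real-valued. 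Since \(\sigma>0\), letting \(K\to\infty\) proves the first claim. As a by-product, \(\norm{\Rnat{\gamma}(\hat x^k)}\to0\).

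For the second claim, let \(\hat x^{k}\to\hat x^\star\) along a subsequence \(k\in K\). The main step is continuity of \(\Rnat{\gamma}\). This holds because \(\nabla f\) is continuous and \(\prox_{\gamma g}\) is nonexpansive (by convexity of \(g\), \cref{assumption:basic-g}), so \(T_\gamma\) is continuous. Hence \(\Rnat{\gamma}(\hat x^\star)=\lim_{k\in K}\Rnat{\gamma}(\hat x^k)=0\), i.e.\ \(\hat x^\star\in\zer\Rnat{\gamma}\). \Cref{prop:residuals} then identifies this as a stationary point of \(\varphi\). The only point needing care here is that the limit point may lie outside \(\operatorname{dom} g\) a priori. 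This does not matter, since \(\Rnat{\gamma}\) is defined and continuous on all of \(\R^n\).

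For the third claim, the linesearch condition gives monotonicity: \(\varphi(\hat x^{k+1})\leq\varphi(\hat x^k)\leq\dots\leq\varphi(\hat x^0)\). Thus every \(\hat x^k\) lies in the sublevel set \(\{x\mid\varphi(x)\leq\varphi(\hat x^0)\}\). This set is bounded by level boundedness, so \(\seq{\hat x^k}\) is bounded.

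I expect no real obstacle. The only subtle points are checking that \(\varphi(\hat x^0)\) is finite and using continuity of \(T_\gamma\) rather than of \(\varphi\), which is merely lsc.
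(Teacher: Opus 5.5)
Your proof is correct and follows the same route as the paper. You telescope the linesearch inequality against $\inf\varphi>-\infty$, pass to the limit in $\Rnat{\gamma}(\hat x^k)\to0$ using continuity of $T_\gamma$, and use monotonicity of $\varphi(\hat x^k)$ for boundedness; your checks that $\varphi(\hat x^0)<\infty$ and that continuity of $T_\gamma$ (not of $\varphi$) is what is needed just spell out details the paper leaves implicit.
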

	\begin{proof}
		A standard telescoping argument and $\inf \varphi > -\infty$ establish that \(\sum_{k \in \N} \norm{ \Rnat{\gamma}(\hat x^k)}^2 < \infty\), and in particular \(\Rnat{\gamma}(\hat x^k) \to 0\).
		Therefore, any accumulation point \(\hat x^\star\) of \(\seq{\hat x^k}\) satisfies \(\hat x^\star = T_\gamma(\hat x^\star)\) (cfr.\,\cref{prop:residuals}), and is thus stationary (see \cite{stella2017simple} for a similar argument).
		Iterative application of \eqref{eq:linesearch-zerofprpp} reveals that \(\varphi(\hat x^k)\leq\varphi(\hat x^0)\) for all \(k\in\N\); hence \(\seq{\hat x^k}\) is bounded whenever \(\varphi\) is level bounded.
	\end{proof}

	Moreover, if the directions $\seq{\bar d^k}$ are superlinear with respect to \(\seq{\bar x^k}\), then unit stepsize is eventually accepted, achieving a local superlinear rate of convergence.

	\begin{theorem}[Superlinear convergence]\label{thm:superlinear-convergence}%
		Let \cref{assumption:basic} hold.
		Consider the iterates generated by \panocpp{} and suppose that \(\hat x^k\) converges to a strong local minimizer \(x^\star\) of \(\varphi\).
		Suppose further that the directions \(\seq{\bar d^k}\) are superlinear with respect to \(\seq{\bar x^k}\), in the sense that
		\begin{equation}\label{eq:dk}
			\lim_{k\to\infty}\frac{\norm{ \bar x^k+\bar d^k-\bar x^\star}}{\norm{ \bar x^k-\bar x^\star }} = 0
		\end{equation}
		with \(\bar x^\star:=x^\star - \gamma \nabla f(x^\star)\).
		Then \(\tau_k=1\) is eventually always accepted and \(\seq{\hat x^k}\) converges superlinearly.
	\end{theorem}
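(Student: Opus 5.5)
We will show that for $k$ large the full step $\tau_k=1$ satisfies \eqref{eq:linesearch-zerofprpp}. Write $x_1^{k+1}\coloneqq\bar x^k+\bar d^k$ and $\hat x_1^{k+1}\coloneqq\prox_{\gamma g}(x_1^{k+1})$. Once acceptance is established, superlinear convergence follows at once: $x^{k+1}=x_1^{k+1}$, and by \eqref{eq:dk}, $\norm{x^{k+1}-\bar x^\star}=o(\norm{\bar x^k-\bar x^\star})$. We have $\bar x^k=\hat x^k-\gamma\nabla f(\hat x^k)$, and $\hat x\mapsto\hat x-\gamma\nabla f(\hat x)$ is $(1+\gamma L_f)$-Lipschitz, so $\norm{\bar x^k-\bar x^\star}\le(1+\gamma L_f)\norm{\hat x^k-x^\star}$. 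Since $x^\star$ is stationary, $x^\star=\prox_{\gamma g}(\bar x^\star)$, and nonexpansiveness of the prox gives $\norm{\hat x^{k+1}-x^\star}\le\norm{x^{k+1}-\bar x^\star}$. Chaining these bounds yields $\norm{\hat x^{k+1}-x^\star}=o(\norm{\hat x^k-x^\star})$.

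The core is therefore the acceptance inequality
\[
\varphi(\hat x_1^{k+1})-\varphi(\hat x^k)\le-\sigma\norm{\Rnat{\gamma}(\hat x^k)}^2 .
\]
We interpret ``strong local minimizer'' as quadratic growth: $\varphi(x)-\varphi(x^\star)\ge\frac{\mu}{2}\norm{x-x^\star}^2$ for $x$ near $x^\star$. With this, the left-hand side is at most $[\varphi(\hat x_1^{k+1})-\varphi(x^\star)]-\frac{\mu}{2}\norm{\hat x^k-x^\star}^2$. We need two estimates, both of order $\norm{\hat x^k-x^\star}^2$.

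\emph{Upper bound near $x^\star$.} We use \cref{lem:psi-gamma-Lipschitz}, but the bare Lipschitz estimate is only first order, whereas we need a bound of order $o(\norm{\hat x^k-x^\star}^2)$. The plan is a descent-lemma argument. With $p=\hat x_1^{k+1}$ and $v=\gamma^{-1}(x_1^{k+1}-p)\in\partial g(p)$, convexity gives $g(p)\le g(x^\star)+\langle v,p-x^\star\rangle$, and $L_f$-smoothness gives $f(p)\le f(x^\star)+\langle\nabla f(x^\star),p-x^\star\rangle+\frac{L_f}{2}\norm{p-x^\star}^2$. Adding these,
\[
\varphi(p)-\varphi(x^\star)\le\langle\Rnor{\gamma}(x_1^{k+1}),p-x^\star\rangle+O(\norm{p-x^\star}^2),
\]
up to the gap between $\nabla f(p)$ and $\nabla f(x^\star)$, which is again $O(\norm{p-x^\star}^2)$. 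The residual is Lipschitz, $\norm{\Rnor{\gamma}(x_1^{k+1})}\le(L_f+2/\gamma)\norm{x_1^{k+1}-\bar x^\star}$, since $\Rnor{\gamma}(\bar x^\star)=0$. So the whole expression is $O(\norm{x_1^{k+1}-\bar x^\star}^2)=o(\norm{\hat x^k-x^\star}^2)$.

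\emph{Lower bound on the residual.} We need $\norm{\Rnat{\gamma}(\hat x^k)}\le c\norm{\hat x^k-x^\star}$ with $c$ controlling $\sigma c^2<\frac{\mu}{2}$. This is the step I expect to be the main obstacle: $\sigma$ is fixed beforehand, so this is not automatic. The natural residual is Lipschitz with constant $(2+\gamma L_f)/\gamma$, and $\sigma<\gamma\frac{2-\gamma L_f}{2}$, but comparing $\sigma c^2$ with the growth modulus $\mu$ is not immediate. What I would try first is to avoid quadratic growth altogether and use \cref{lem:cost-decrease} instead. We have $\varphi(T_\gamma(\hat x^k))\le\varphi(\hat x^k)-\gamma\frac{2-\gamma L_f}{2}\norm{\Rnat{\gamma}(\hat x^k)}^2$, and $\varphi(T_\gamma(\hat x^k))=\psi_\gamma(\bar x^k)\ge\varphi(x^\star)$ by local minimality. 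Hence it suffices to show
\[
\varphi(\hat x_1^{k+1})-\varphi(x^\star)\le\bigl(\gamma\tfrac{2-\gamma L_f}{2}-\sigma\bigr)\norm{\Rnat{\gamma}(\hat x^k)}^2 .
\]
The left-hand side is $o(\norm{\hat x^k-x^\star}^2)$. It then remains to show $\norm{\hat x^k-x^\star}=O(\norm{\Rnat{\gamma}(\hat x^k)})$, an error bound at a strong local minimizer. I would derive this error bound from quadratic growth via the standard equivalence between growth and metric subregularity for prox-regular composite functions; at a strong minimizer such results are available in \cite{themelis2018forward}.
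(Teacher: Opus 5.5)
Your descent-lemma estimate $\varphi(\hat x_1^{k+1})-\varphi(x^\star)=O(\norm{x_1^{k+1}-\bar x^\star}^2)$ is a valid substitute for the paper's \cref{thm:QUB}. Your rate argument once $\tau_k=1$ is accepted matches the paper's.

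The gap is in the last step of the acceptance argument. Your reduction keeps only the part $\gamma\frac{2-\gamma L_f}{2}\norm{\Rnat{\gamma}(\hat x^k)}^2$ of $\varphi(\hat x^k)-\varphi(x^\star)$. It therefore needs the error bound $\norm{\hat x^k-x^\star}=O(\norm{\Rnat{\gamma}(\hat x^k)})$, and quadratic growth does not give this under \cref{assumption:basic} alone. Prox-regularity is not enough either. Take $g=0$ and $f(x)=x^2\bigl(2+\sin(10\ln\abs{x})\bigr)$ with $f(0)=0$. Then $f'$ is globally Lipschitz and $f(x)\ge x^2$. Yet $f'(x)=x\bigl(4+2\sin(10\ln\abs{x})+10\cos(10\ln\abs{x})\bigr)$ vanishes on a sequence $x_j\to0$. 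If $\hat x^k=x_j$, your sufficient condition demands $\varphi(\hat x_1^{k+1})\le\varphi(x^\star)$, i.e.\ $\hat x_1^{k+1}=x^\star$. The actual test \eqref{eq:linesearch-zerofprpp} nonetheless holds there. Nor can you borrow the bound from \cite{themelis2018forward}: the relevant results there (cf.\ \cref{corr:strong-local-minimality}) require \cref{ass:second-order} and $\gamma<\nicefrac{1}{L_f}$. Neither is a hypothesis of this theorem, which allows $\gamma$ up to $\nicefrac{2}{L_f}$.

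The missing idea is to compare the candidate's excess with the current excess, not with the residual. Let $E_k\coloneqq\varphi(\hat x^k)-\varphi(x^\star)\ge\frac{\mu}{2}\norm{\hat x^k-x^\star}^2$. Your own estimate gives $\varphi(\hat x_1^{k+1})-\varphi(x^\star)=o(\norm{\hat x^k-x^\star}^2)$. Hence $\epsilon_k\coloneqq[\varphi(\hat x_1^{k+1})-\varphi(x^\star)]/E_k\to0$. Using $\varphi(T_\gamma(\hat x^k))\ge\varphi(x^\star)$ and \cref{lem:cost-decrease},
\begin{align*}
\varphi(\hat x_1^{k+1})-\varphi(\hat x^k)
&=-(1-\epsilon_k)E_k
\le-(1-\epsilon_k)\bigl[\varphi(\hat x^k)-\varphi(T_\gamma(\hat x^k))\bigr]\\
&\le-(1-\epsilon_k)\gamma\tfrac{2-\gamma L_f}{2}\norm{\Rnat{\gamma}(\hat x^k)}^2,
\end{align*}
which is at most $-\sigma\norm{\Rnat{\gamma}(\hat x^k)}^2$ for large $k$. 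This is the paper's argument. Growth is used only to make $\epsilon_k$ small, so no error bound and no comparison between $\sigma$ and $\mu$ is needed.
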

	\begin{proof}
		Up to discarding early iterates, there exists \(\mu>0\) such that \(\varphi(\hat x^k)-\varphi_\star\geq\tfrac{\mu}{2}\norm{ \hat x^k-x^\star}\) holds for every \(k\), where \(\varphi_\star:=\varphi(x^\star)\).
		It follows from \cref{thm:QUB} that
		\begin{equation*}
			\begin{aligned}
				\epsilon_k := \frac{
					\psi_\gamma(\bar x^k+\bar d^k)-\varphi_\star
				}{
					\psi_\gamma(x^k)-\varphi_\star
				}
			&\leq
				\frac{1+\gamma L_f}{\gamma\mu}
				\frac{
					\norm{ \bar x^k+\bar d^k-\bar x^\star}^2
				}{
					\norm{ \hat x^k-x^\star }^2
				}\\
			&\leq
				\frac{(1+\gamma L_f)^2}{\gamma\mu}
				\frac{
					\norm{ \bar x^k+\bar d^k-\bar x^\star}^2
				}{
					\norm{\bar x^k-\bar x^\star}^2
				},
			\end{aligned}
		\end{equation*}
		where the last inequality is due to the \((1+\gamma L_f)\)-Lipschitz continuity of \(\id - \gamma \nabla f\).
		As such, it follows that \(\epsilon_k\to0\) as \(k\to\infty\), and we can thus assume without loss of generality that \(\epsilon_k\leq1\) holds for all \(k\).
		Similarly, since \(y^k:=\prox_{\gamma g}(\hat x^k - \gamma \nabla f(\hat x^k))\) also converges to \(x^\star\), we may assume that for all \(k\) it holds that
		\(
			\varphi(y^k)
		\geq
			\varphi_\star
		\).
		Then,
		\begin{align*}
			\psi_\gamma(\bar x^k+\bar d^k)-\psi_\gamma(x^k)
		={} &
			-(1-\epsilon_k)
			\bigl[\psi_\gamma(x^k)-\varphi_\star\bigr]
		\\
		\leq{} &
			-(1-\epsilon_k)
			\bigl[\psi_\gamma(x^k)-\varphi(y^k)\bigr]
		\\
		\leq{} &
			-(1-\epsilon_k)
			\gamma\tfrac{2-\gamma L_f}{2}
			\norm{ \Rnat{\gamma}(\hat x^k) }^2
		\end{align*}
		is eventually smaller than \(-\sigma\norm{ \Rnat{\gamma}(\hat x^k) }^2\), resulting in acceptance of \(\tau_k=1\).
		It follows that the \(\bar x^k\)-update eventually boils down to
		\[
			\begin{cases}
				\hat x^{k+1} = \prox_{\gamma g}(\bar x^k+\bar d^k)
			\\
				\bar x^{k+1} = \hat x^{k+1} - \gamma \nabla f(\hat x^{k+1}),
			\end{cases}
		\]
		hence, by nonexpansiveness of \(\prox_{\gamma g}\) and \((1+\gamma L_f)\)-Lipschitz continuity of \(\id - \gamma \nabla f\),
		\[
			\lim_{k\to\infty}
			\frac{
				\norm{ \hat x^{k+1}-x^\star }
			}{
				\norm{ \hat x^k-x^\star }
			}
		\leq
			\lim_{k\to\infty}
			\frac{
				\norm{ \bar x^k+\bar d^k-\bar x^\star }
			}{
				\frac{1}{1+\gamma L_f}
				\norm{ \bar x^k-\bar x^\star }
			}
		=
			0.
		\qedhere
		\]
	\end{proof}

	If $\Rnor{\gamma}$ is strictly differentiable at the limit $\bar x^\star$, then Newton-type directions of the form \eqref{eq:newton-direction} are superlinear directions \eqref{eq:dk} under a Dennis-Mor\'e condition \eqref{eq:dennis-more}.
	Before proving this, we first describe the local differentiability of the normal residual under the same assumption that yields differentiability of the natural residual \cite[Assumpt. II]{themelis2018forward}.
	Regarding the notion of \emph{epi-differentiability}, we refer the interested reader to \cite[\S 13]{rockafellar1998variational}.

	\begin{assumption}\label{ass:second-order}%
		With respect to a given point \(x^\star \in \fix T_\gamma\)
		\begin{assumenum}
		\item
			\(\nabla^2f\) exists and is (strictly) continuous around \(x^\star\);
		\item
			\(g\) is (strictly) twice epi-differentiable at \(x^\star\) for \(-\nabla f(x^\star)\), with generalized quadratic second order epi-derivative.%
		\end{assumenum}
		The assumptions are said to be ``strictly'' satisfied whenever the stronger conditions in parenthesis hold.
	\end{assumption}

	Twice epi-differentiability is a necessary and sufficient condition for differentiability of $\prox_{\gamma g}$ at \(x^\star - \gamma \nabla f(x^\star)\) \cite{poliquin1996generalized}.
	This is a mild assumption, as exemplified by the fact that it holds for the broad class of $\C^2$-partly smooth functions \cite{lewis2002active} under a mild strict complementarity condition $-\nabla f(x^\star) \in \relint \partial g(x^\star)$ \cite[Thm.\,28]{daniilidis2006geometrical}, see also \cite{bodard2025second}.

	\begin{theorem}[Differentiability of the normal residual]\label{thm:residual-diff}%
		Let \cref{assumption:basic} hold and let \(\gamma > 0\).
		Suppose further that \cref{ass:second-order} holds (strictly) with respect to a point \(x^\star \in \fix T_\gamma\).
		Then, the normal residual \(\Rnor{\gamma}\) is (strictly) differentiable at \(\bar x^\star := x^\star - \gamma \nabla f(x^\star)\) with Jacobian
		\[
			\jac{\Rnor{\gamma}} (\bar x^\star) = \gamma^{-1} \left[ \id - Q_\gamma(x^\star) P_\gamma(x^\star) \right]
		\]
		where \( Q_\gamma(x^\star) := \id - \gamma \nabla^2 f(x^\star) \), \(P_\gamma(x^\star) := \jac{\prox_{\gamma g}}(\bar x^\star) \).
	\end{theorem}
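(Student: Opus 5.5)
The plan is to write \(\Rnor{\gamma}\) as a composition and apply the chain rule. By definition,
\[
	\Rnor{\gamma}(z) = \nabla f(\prox_{\gamma g}(z)) + \gamma^{-1}\bigl(z - \prox_{\gamma g}(z)\bigr)
	= \gamma^{-1}\bigl[z - (\id - \gamma\nabla f)(\prox_{\gamma g}(z))\bigr],
\]
so \(\Rnor{\gamma} = \gamma^{-1}[\id - (\id-\gamma\nabla f)\circ\prox_{\gamma g}]\). It therefore suffices to show two things. First, \(\prox_{\gamma g}\) is (strictly) differentiable at \(\bar x^\star\) with Jacobian \(P_\gamma(x^\star)\). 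Second, \(\id - \gamma\nabla f\) is (strictly) differentiable at \(\prox_{\gamma g}(\bar x^\star)\) with Jacobian \(Q_\gamma(x^\star)\).

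The first step is to identify the base point. Since \(x^\star\in\fix T_\gamma\), we have \(\prox_{\gamma g}(\bar x^\star) = T_\gamma(x^\star) = x^\star\). Hence the outer map is evaluated at \(x^\star\). By \cref{ass:second-order}(i), \(\nabla f\) is differentiable around \(x^\star\) with Jacobian \(\nabla^2 f\). In the strict case \(\nabla^2 f\) is continuous around \(x^\star\), and a \(\C^1\) map is strictly differentiable. Therefore \(\id-\gamma\nabla f\) is (strictly) differentiable at \(x^\star\) with Jacobian \(Q_\gamma(x^\star)\).

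The second step, and the main one, is differentiability of the proximal map at \(\bar x^\star\). The optimality condition \(x^\star = \prox_{\gamma g}(\bar x^\star)\) means \(\gamma^{-1}(\bar x^\star - x^\star) = -\nabla f(x^\star)\in\partial g(x^\star)\). So \(\bar x^\star\) is precisely the point \(x^\star + \gamma v\) with \(v = -\nabla f(x^\star)\), which is the subgradient for which \cref{ass:second-order}(ii) assumes (strict) twice epi-differentiability with generalized quadratic second-order epi-derivative.

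I would invoke the result of \cite{poliquin1996generalized} cited just before the theorem. Twice epi-differentiability of \(g\) at \(x^\star\) for \(v\) is equivalent to proto-differentiability of \(\partial g\) there, and this is equivalent to differentiability of \(\prox_{\gamma g}\) at \(x^\star+\gamma v\). Generalized quadraticity makes the derivative linear, and the strict version gives strict differentiability. The analogous statement used for \(\Rnat{\gamma}\) in \cite[Assumpt.\,II]{themelis2018forward} can be quoted for this step. I expect this to be the only delicate point: one must check that the "strict" variant transfers. Here I would rely on the corresponding strict result for prox-regular (in particular convex) functions in \cite{poliquin1996generalized}, rather than reprove it.

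Finally, the chain rule for (strictly) differentiable maps gives the conclusion. A composition of strictly differentiable maps is strictly differentiable, so \((\id-\gamma\nabla f)\circ\prox_{\gamma g}\) is (strictly) differentiable at \(\bar x^\star\) with Jacobian \(Q_\gamma(x^\star)P_\gamma(x^\star)\). The affine combination with \(\gamma^{-1}\id\) then yields
\[
	\jac{\Rnor{\gamma}}(\bar x^\star)=\gamma^{-1}[\id - Q_\gamma(x^\star)P_\gamma(x^\star)].
\]
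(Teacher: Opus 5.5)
Your proposal is correct and follows the paper's proof essentially step for step. You use (strict) differentiability of \(\prox_{\gamma g}\) at \(\bar x^\star\) via \cite{poliquin1996generalized}, the chain rule, and \(\prox_{\gamma g}(\bar x^\star)=x^\star\) from \(x^\star\in\fix T_\gamma\); the only cosmetic difference is that you group \(\Rnor{\gamma}=\gamma^{-1}[\id-(\id-\gamma\nabla f)\circ\prox_{\gamma g}]\) before differentiating, whereas the paper groups the terms afterwards. (Minor reading note: ``strictly continuous'' in \cref{ass:second-order} means locally Lipschitz, not just continuous, but continuity of \(\nabla^2 f\) near \(x^\star\) already makes \(\id-\gamma\nabla f\) strictly differentiable at \(x^\star\) in both variants, so your argument is unaffected.)
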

	\begin{proof}
		Under \cref{ass:second-order}, \(\prox_{\gamma g}\) is (strictly) differentiable at \( x^\star - \gamma \nabla f(x^\star) \eqqcolon \bar x^\star \) \cite{poliquin1996generalized}, from which we obtain
		\begin{equation*}
			\begin{aligned}
				\jac{\Rnor{\gamma}}(\bar x^\star) ={} &\nabla^2 f(\prox_{\gamma g}(\bar x^\star)) \jac{\prox_{\gamma g}}(\bar x^\star)\\
				&+ \gamma^{-1} (\id - \jac{\prox_{\gamma g}}(\bar x^\star)).
			\end{aligned}
		\end{equation*}
		Using the fact that \( P_\gamma(x^\star) = \jac{\prox_{\gamma g}}(\bar x^\star) \) and grouping terms involving \(P_\gamma(x^\star)\) yields
		\[
			\jac{\Rnor{\gamma}}(\bar x^\star) = \gamma^{-1} \left( \id - Q_\gamma(\prox_{\gamma g}(\bar x^\star)) P_\gamma(x^\star) \right).
		\]
		The claim now follows by observing that \(\prox_{\gamma g}(\bar x^\star) = \prox_{\gamma g}(x^\star - \gamma \nabla f(x^\star)) = x^\star\).
	\end{proof}

	Observe from \cite[Thm. 4.10]{themelis2018forward} that the Jacobians of \(\Rnat{\gamma}\) and \(\Rnor{\gamma}\) are related by the relation
	\begin{equation}\label{eq:JR}
		\jac{\Rnat{\gamma}}(x^\star) = \jac{\Rnor{\gamma}}(\bar x^\star)^\top.
	\end{equation}
	Hence, results involving the natural residual are easily transferred to the normal residual.

	\begin{corollary}[Conditions for strong local minimality]\label{corr:strong-local-minimality}%
		Let \cref{assumption:basic} hold and let \(\gamma\in(0,\nicefrac{1}{L_f})\).
		Suppose further that \cref{ass:second-order} is satisfied with respect to a point \(x^\star \in \fix T_\gamma\).
		Then the following are equivalent:
		\begin{corollenum}
			\item \(x^\star\) is a strong local minimizer for \(\varphi\);
			\item \(x^\star\) is a local minimizer for \(\varphi\) and \( \jac{\Rnat{\gamma}}(x^\star)\) is nonsingular;
			\item \(x^\star\) is a local minimizer for \(\varphi\) and \( \jac{\Rnor{\gamma}}(\bar x^\star)\) with \(\bar x^\star := x^\star - \gamma \nabla f(x^\star)\) is nonsingular.
		\end{corollenum}
	\end{corollary}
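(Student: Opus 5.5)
The equivalence (i)\(\Leftrightarrow\)(ii) is already available for the natural residual: under \cref{assumption:basic}, \(\gamma\in(0,\nicefrac{1}{L_f})\) and \cref{ass:second-order} at \(x^\star\in\fix T_\gamma\), it is the corresponding result of \zerofpr{}, \cite[Thm.\,4.11]{themelis2018forward}. I will invoke it directly rather than reprove it. If a self-contained argument were wanted, the route would be the following. The FBE and \(\varphi\) share local minimizers and strong local minimizers near \(x^\star\) when \(\gamma<\nicefrac{1}{L_f}\). At a critical point, \(\nabla\fbe(x^\star)=Q_\gamma(x^\star)\Rnat{\gamma}(x^\star)\), so \(\nabla^2\fbe(x^\star)=Q_\gamma(x^\star)\jac{\Rnat{\gamma}}(x^\star)\). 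Since \(Q_\gamma(x^\star)=\id-\gamma\nabla^2 f(x^\star)\succ0\), nonsingularity of the Jacobian combined with local minimality (positive semidefiniteness of the Hessian) gives positive definiteness, hence strong minimality. Conversely, strong minimality gives positive definiteness and therefore nonsingularity.

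The remaining equivalence (ii)\(\Leftrightarrow\)(iii) follows from the transpose relation \eqref{eq:JR}. First I will check its hypotheses. \Cref{thm:residual-diff} gives differentiability of \(\Rnor{\gamma}\) at \(\bar x^\star\), with Jacobian \(\gamma^{-1}[\id-Q_\gamma(x^\star)P_\gamma(x^\star)]\). The same assumption gives differentiability of \(\Rnat{\gamma}=\gamma^{-1}(\id-\prox_{\gamma g}\circ(\id-\gamma\nabla f))\) at \(x^\star\), by the chain rule, with Jacobian \(\gamma^{-1}[\id-P_\gamma(x^\star)Q_\gamma(x^\star)]\).

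Next I will verify \eqref{eq:JR} explicitly, so the argument does not rest only on the citation. Both \(Q_\gamma(x^\star)\) and \(P_\gamma(x^\star)\) are symmetric. For \(Q_\gamma\) this is immediate from the symmetry of the Hessian. For \(P_\gamma\), note that \(\prox_{\gamma g}=\nabla(\tfrac12\norm{\cdot}^2-\gamma g^\gamma)\), so its Jacobian, where it exists, is a Hessian and is therefore symmetric. Hence \((P_\gamma Q_\gamma)^\top=Q_\gamma P_\gamma\), which is exactly \eqref{eq:JR}. Because a matrix is nonsingular if and only if its transpose is, (ii) and (iii) are equivalent, and the local minimality clause is common to both.

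The main obstacle is the symmetry of \(P_\gamma(x^\star)\). Differentiability of \(\prox_{\gamma g}\) at a single point does not by itself yield a Hessian in the classical \(\C^2\) sense. I would justify symmetry either through the generalized quadratic form of the second-order epi-derivative, whose associated linear map is symmetric \cite{poliquin1996generalized}, or by appealing to \cite[Thm.\,4.10]{themelis2018forward}, where \eqref{eq:JR} is established. The nontrivial analytic content of (i)\(\Leftrightarrow\)(ii) is then carried entirely by the cited \zerofpr{} theory.
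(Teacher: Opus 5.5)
Your proposal is correct and matches the paper's proof. The paper takes (i)$\Leftrightarrow$(ii) from \cite[Thm.\,4.11]{themelis2018forward}, and gets (ii)$\Leftrightarrow$(iii) from the transpose relation \eqref{eq:JR}, since a matrix and its transpose are simultaneously nonsingular. Your explicit check of \eqref{eq:JR} via symmetry of $Q_\gamma(x^\star)$ and $P_\gamma(x^\star)$ is sound, and the obstacle you flag is milder than you think: a gradient map such as $\prox_{\gamma g}=\nabla(\tfrac12\norm{\cdot}^2-\gamma g^\gamma)$ that is differentiable at a point automatically has a symmetric Jacobian there, by the classical symmetry of second derivatives under pointwise twice differentiability.
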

	\begin{proof}
		The equivalence between the first two items is shown in \cite[Thm. 4.11]{themelis2018forward}; in turn, that between the last two items follows from \eqref{eq:JR}.
	\end{proof}

	When $\jac{\Rnor{\gamma}}(\bar x^\star)$ is nonsingular and the Dennis-Mor\'e condition \eqref{eq:dennis-more} holds, the directions $\seq{\bar d^k}$ are superlinear, which by \cref{thm:superlinear-convergence} implies superlinear convergence of $\seq{\hat x^k}$.

	\begin{theorem}[Superlinear convergence under Dennis-Mor\'e condition]
		Let \cref{assumption:basic} hold, and let \(\gamma \in (0, \nicefrac{1}{L_f})\).
		Consider the iterates generated by \cref{alg:zerofprpp} and suppose that \cref{ass:second-order} is strictly satisfied at a strong local minimizer \(x^\star\) of \(\varphi\).
		Moreover, suppose that \( \left( \hat x^k \right)_{k \in \N} \) converges to \( x^\star \) and that the directions \(\left(\bar d^k\right)_{k \in \N}\) satisfy the Dennis-Mor\'e condition \eqref{eq:dennis-more}
		where
		\(
			\bar x^\star \coloneqq x^\star - \gamma \nabla f(x^\star).
		\)
		Then, \eqref{eq:dk} holds and in particular all the claims from \cref{thm:superlinear-convergence} hold.
	\end{theorem}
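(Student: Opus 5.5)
The plan is to show \eqref{eq:dk} with the standard Dennis--Mor\'e argument, and then invoke \cref{thm:superlinear-convergence}.

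\emph{Setup.} By \cref{thm:residual-diff}, applied under the strict version of \cref{ass:second-order}, \(\Rnor{\gamma}\) is strictly differentiable at \(\bar x^\star\). Write \(J\coloneqq\jac{\Rnor{\gamma}}(\bar x^\star)\). Since \(\gamma\in(0,\nicefrac1{L_f})\) and \(x^\star\) is a strong local minimizer, \cref{corr:strong-local-minimality} shows that \(J\) is nonsingular. Next, \(\Rnor{\gamma}(\bar x^\star)=\nabla f(x^\star)+\gamma^{-1}(\bar x^\star-x^\star)=0\), because \(\prox_{\gamma g}(\bar x^\star)=x^\star\) for \(x^\star\in\fix T_\gamma\). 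Finally, \(\bar x^k=\hat x^k-\gamma\nabla f(\hat x^k)\to\bar x^\star\), by continuity of \(\id-\gamma\nabla f\) and \(\hat x^k\to x^\star\).

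\emph{Key estimate.} Write \(e^k\coloneqq\bar x^k-\bar x^\star\). I will first show that \(\norm{\bar d^k}/\norm{e^k}\) stays bounded, and then convert this into the superlinear estimate \eqref{eq:dk}. Strict differentiability gives
\[
\Rnor{\gamma}(\bar x^k+\bar d^k)-\Rnor{\gamma}(\bar x^k)-J\bar d^k=o(\norm{\bar d^k})
\quad\text{and}\quad
\Rnor{\gamma}(\bar x^k)-Je^k=o(\norm{e^k}).
\]
The second relation needs only differentiability; the first uses strictness, since both points move together. It requires \(\bar d^k\to0\), which I address below. Combining these with \eqref{eq:dennis-more} gives
\[
\Rnor{\gamma}(\bar x^k+\bar d^k)=\bigl[\Rnor{\gamma}(\bar x^k)+J\bar d^k\bigr]+o(\norm{\bar d^k})=o(\norm{\bar d^k}).
\]
Nonsingularity of \(J\) gives a lower bound \(\norm{\Rnor{\gamma}(y)}\ge c\norm{y-\bar x^\star}\) for \(y\) near \(\bar x^\star\), with \(c>0\). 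Applying it at \(y=\bar x^k+\bar d^k\) yields \(\norm{\bar x^k+\bar d^k-\bar x^\star}=o(\norm{\bar d^k})\). From \(\bar d^k=(\bar x^k+\bar d^k-\bar x^\star)-e^k\) we get \(\norm{\bar d^k}\le\norm{e^k}+o(\norm{\bar d^k})\), hence \(\norm{\bar d^k}=O(\norm{e^k})\). Therefore \(\norm{\bar x^k+\bar d^k-\bar x^\star}=o(\norm{e^k})\), which is \eqref{eq:dk}. The claims of \cref{thm:superlinear-convergence} then follow directly.

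\emph{Main obstacle.} The delicate step is justifying \(\bar d^k\to0\), which is needed to apply strict differentiability and the local bound at \(\bar x^k+\bar d^k\). I will try to derive it from the Dennis--Mor\'e condition itself. Rewrite \eqref{eq:dennis-more} as \(\Rnor{\gamma}(\bar x^k)+J\bar d^k=o(\norm{\bar d^k})\). This gives \(J\bar d^k=-\Rnor{\gamma}(\bar x^k)+o(\norm{\bar d^k})\), and hence
\[
\norm{J^{-1}}^{-1}\norm{\bar d^k}\le\norm{\Rnor{\gamma}(\bar x^k)}+o(\norm{\bar d^k}).
\]
For large \(k\) this yields \(\norm{\bar d^k}\le C\norm{\Rnor{\gamma}(\bar x^k)}\to0\), because \(\Rnor{\gamma}\) is continuous and vanishes at \(\bar x^\star\). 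This argument also gives the bound \(\norm{\bar d^k}=O(\norm{e^k})\) directly, so the circularity in the key estimate disappears.

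One more case needs care. If \(\bar d^k=0\) or \(e^k=0\) for some \(k\), the ratios in \eqref{eq:dennis-more} and \eqref{eq:dk} are undefined. I will assume, as is implicit in both conditions, that these quantities are nonzero, or else treat those indices trivially.
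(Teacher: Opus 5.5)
Your proposal is correct and follows essentially the same route as the paper. Strict differentiability of \(\Rnor{\gamma}\) at \(\bar x^\star\) combined with \eqref{eq:dennis-more} gives \(\norm{\Rnor{\gamma}(\bar x^k+\bar d^k)}=o(\norm{\bar d^k})\); the lower bound from nonsingularity turns this into \(\norm{\bar x^k+\bar d^k-\bar x^\star}=o(\norm{\bar d^k})\); and the triangle inequality then yields \eqref{eq:dk}. Your explicit derivation of \(\bar d^k\to0\) from \eqref{eq:dennis-more} and invertibility of \(\jac{\Rnor{\gamma}}(\bar x^\star)\) is a useful addition, since the paper asserts this step without noting that it needs nonsingularity.
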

	\begin{proof}
		By continuity of \(\nabla f\), convergence of \( \left( \hat x^k \right)_{k \in \N} \) to \(x^\star\) implies convergence of \(\left(\bar x^k\right)_{k \in \N}\) to \( \bar x^\star \).
		From $x^\star \in \fix T_\gamma$, we derive $\Rnor{\gamma}(\bar x^k) \to 0$ and by \eqref{eq:dennis-more} that $\bar d^k \to 0$.
		Note that
		\begin{equation*}
			\begin{aligned}
				&\frac{\Rnor{\gamma}(\bar x^k) + \jac{\Rnor{\gamma}}(\bar x^\star) \bar d^k}{\norm{ \bar d^k }} = \frac{\Rnor{\gamma}(\bar x^k + \bar d^k)}{\norm{ \bar d^k }}\\
				&\qquad+ \frac{\Rnor{\gamma}(\bar x^k) + \jac{\Rnor{\gamma}}(\bar x^\star) \bar d^k - \Rnor{\gamma}(\bar x^k + \bar d^k)}{\norm{ \bar d^k }}
			\end{aligned}
		\end{equation*}
		Then, since $\bar d^k \to 0$ and from the Dennis-Mor\'e condition \eqref{eq:dennis-more} and strict differentiability of \(\Rnor{\gamma}\) at \(\bar x^\star\) it follows that
		\[
			\lim_{k \to \infty} \frac{\norm{ \Rnor{\gamma}(\bar x^k + \bar d^k) }}{\norm{ \bar d^k }} = 0.
		\]
		Nonsingularity of \(\jac{\Rnor{\gamma}}(\bar x^\star)\) (from \cref{corr:strong-local-minimality}) ensures that for all \(\bar x\) sufficiently close to \(\bar x^\star\) there exists a constant \(\alpha>0\) such that \(\norm{ \Rnor{\gamma}(\bar x)} \geq \alpha \norm{ \bar x - \bar x^\star }\).
		Since \(d^k \to 0\) by the Dennis-Mor\'e condition, also \(\bar x^k + \bar d^k \to \bar x^\star \), and eventually this lower bound must hold.
		Therefore,
		\begin{equation*}
			\begin{aligned}
			0 &\leftarrow \frac{\norm{ \Rnor{\gamma}(\bar x^k + \bar d^k)}}{\norm{ \bar d^k }} \geq \alpha \frac{\norm{ \bar x^k + \bar d^k - \bar x^\star}}{\norm{ \bar d^k}}\\
			&\geq \alpha \frac{\norm{ \bar x^k + \bar d^k - \bar x^\star}}{\norm{ \bar x^k + \bar d^k - \bar x^\star} + \norm{\bar x^k - \bar x^\star}} = \alpha \frac{\frac{\norm{ \bar x^k + \bar d^k - \bar x^\star}}{\norm{ \bar x^k - \bar x^\star}}}{1+\frac{\norm{ \bar x^k + \bar d^k - \bar x^\star}}{\norm{ \bar x^k - \bar x^\star}}}
			\end{aligned}
		\end{equation*}
		and hence \eqref{eq:dk} follows.
	\end{proof}

	We conclude with a remark on \emph{structured} directions \cite{pas2022alpaqa}.

	\begin{remark}[Structured directions]
		Update directions leveraging additional structure of $g$ were proposed in \cite[\S III]{pas2022alpaqa}.
		If $g$ is the indicator of a box, then define index sets of \emph{active} (\(\cK\)) and inactive (\(\cJ\)) constraints at $\hat x - \gamma \nabla f(\hat x)$.
		Since the Jacobian (if it exist) of $\prox_{\gamma g}$ at this forward point is a diagonal matrix with ones and zeros,
		Newton directions $d = -\jac{\Rnat{\gamma}}(\hat x) \Rnat{\gamma}(\hat x)$ satisfy \(\frac{1}{\gamma} d_{\cK} = - \left[ \Rnat{\gamma}(\hat x) \right]_{\cK}\) and
		\begin{equation*}
			\nabla_{\!\cJ\!\cJ}^2 f(\hat x) d_{\cJ} = - \nabla_{\!\cJ} f(\hat x) - \nabla_{\!\cJ\mkern-2mu\cK}^2 f(\hat x) d_{\cK}.
		\end{equation*}
		Thus, $d_{\cJ}$ is `close' to a Newton direction of $f$ restricted to inactive constraints.
		A similar derivation holds for Newton directions $d = -\jac{\Rnor{\gamma}}(x) \Rnor{\gamma}(x)$.
		By a slight abuse of notation, let $\cK$ ($\cJ$) now denote the index sets of active (inactive) constraints at $x$ and define \(\hat x = \prox_{\gamma g}(x)\). Then,
		\begin{equation*}
			\begin{aligned}
				\tfrac{1}{\gamma} d_{\cK} &= - \left[ \Rnor{\gamma}(x) \right]_{\cK} - \nabla_{\cK\mkern-1mu\cJ}^2 f(\hat x) d_{\cJ}\\
				\nabla_{\!\cJ\!\cJ}^2 f(\hat x) d_{\cJ} &= - \nabla_{\!\cJ} f(\hat x).
			\end{aligned}
		\end{equation*}
		In this case $d_{\cJ}$ exactly reduces to a Newton direction of $f$ restricted to the inactive constraints.
		A similar structure is also present if $g = \norm{ \cdot }_1$.
	\end{remark}

	\section{Numerical results}
		We demonstrate the effectiveness of \cref{alg:zerofprpp} against the two state-of-the-art solvers it builds upon: \zerofpr{} \cite{themelis2018forward} and \panoc{} \cite{stella2017forward}.
Experiments include two nonlinear model predictive control (MPC) problems with challenging state constraints, as well as the \textsc{libsvm} and \textsc{Cute}st benchmarks.\footnote{The source code to reproduce the results in this section can be found at
\href{https://github.com/alexanderbodard/panoc-lite-experiments}{\texttt{github.com/alexanderbodard/panoc-lite-experiments}}.}

\subsection{Model predictive control for collision avoidance}\label{sec:mpc}%
	In this experiment we simulate MPC controllers with prediction horizon $N = 32$ for two problems with collision avoidance constraints.
	The first involves the simplified quadcopter model and cylindrical object from \cite[\S IV]{bodard2023pantr}.
	The second models a kinematic bicycle model and an S-shaped corridor, as described in \cite[\S 4.2]{hermans2021penalty}.
	We use the same problem parameters as mentioned in these references.
	Optimal trajectories for both systems are visualized in \cref{fig:mpc-trajectories}.
	\begin{figure}[h]
		\centering
		\includegraphics[scale=0.48, clip, trim=0cm 0.75cm 0cm 0.39cm]{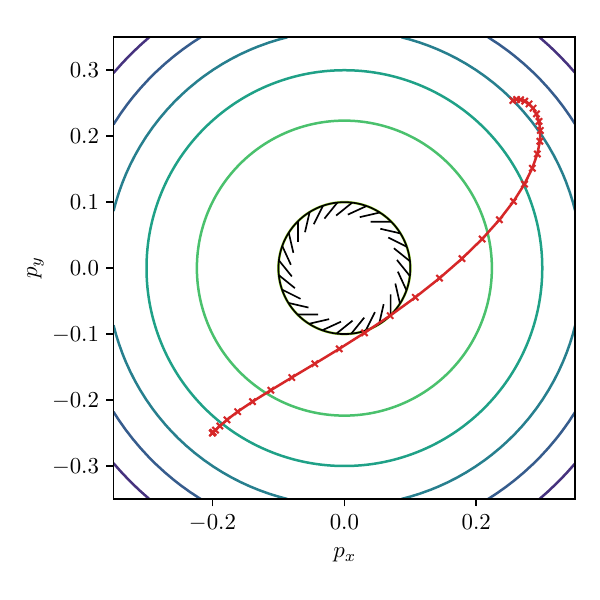}
		\includegraphics[scale=0.459168, clip, trim=1.65cm 0.445cm 2cm 0.39cm]{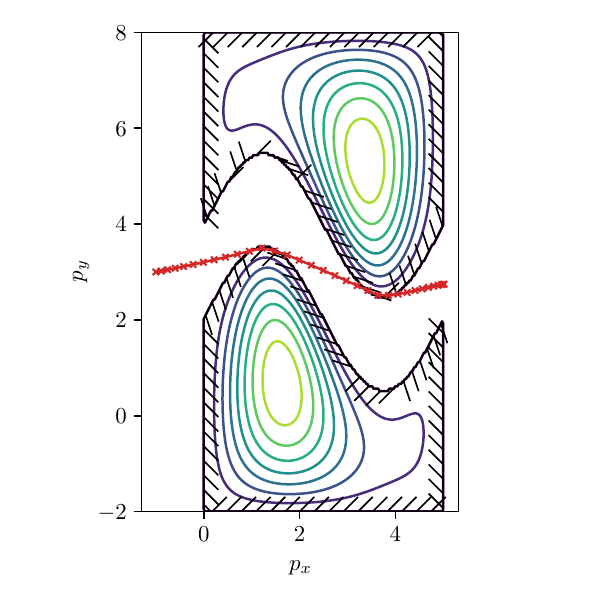}
		\vspace{-0.75em}
		\caption{Optimal trajectories of the quadcopter (left) and bicycle (right).}
		\label{fig:mpc-trajectories}
	\end{figure}

	MPC controllers solve optimal control problems (OCPs)
	\begin{equation} \label{eq:ocp}
		\begin{aligned}
			&  & \minimize_{\bm z, \bm u} \;\; & \sum_{k=0}^{N-1} \ell_k(z^k, u^k) + \ell_N(z^N)                                                              \\
			&  & \stt\;\;                               & \begin{aligned}[t]
															& z^{k+1} = \Gamma(z^k, u^k),                   &  & \forall k \in \N_{[0, N-1]} \\
															& \underline u \le u^k \le \overline u,    &  & \forall k \in \N_{[0, N-1]} \\
															& \underline c \le c(z^k) \le \overline c, &  & \forall k \in \N_{[0, N].}
														\end{aligned}
		\end{aligned}
	\end{equation}
	Here the function $\Gamma : \R^\nnz \times \R^\nnu \to \R^\nnz$ defines the discrete-time dynamics of a system with states $z \in \R^\nnz$ and inputs $u \in \R^\nnu$.
	The loss is a sum of stage costs $\ell_k : \R^\nnz \times \R^\nnu \to \R$ and a terminal cost $\ell_N : \R^\nnz \to \R$.
	The control inputs $u^k$ are bounded by $\underline u, \overline u \in \R^\nnu$, and general state constraints are enforced through the nonlinear mapping $c : \R^\nnz \to \R^\nnc$ in combination with the bounds $\underline c, \overline c \in \R^\nnc$.
	Note that this OCP minimizes over sequences of states $\bm z \eqdef (z^1, \dots, z^N) \in \R^{\nnz \cdot N}$ and inputs $\bm u \eqdef (u^0, \dots u^{N-1}) \in \R^{\nnu \cdot N}$.
	Henceforth, we use instead a \emph{single-shooting} formulation of the OCP that eliminates the dynamics constraint from \eqref{eq:ocp}.
	Denoting $n = \nnu \cdot N$ and $m = \nnc \cdot N$, this yields an NLP of the form
	\begin{equation} \label{eq:nlp}
		\begin{aligned}
			&\minimize_{\bm u \in \R^n} & &\psi(\bm u)\\
			&\stt & &\bm u \in C, \quad h(\bm u) \in D,
		\end{aligned}
	\end{equation}
	where both \(\psi : \R^n \to \R\) and \(h : \R^n \to \R^m\) are potentially nonconvex functions, the sets $C$ and $D$ are closed and simple to project onto.
	The solver library \alpaqa{} \cite{pas2022alpaqa} addresses such problems with an ALM:
	it introduces penalties $\Sigma \in \R^{m \times m}$ and multipliers $y \in \R^m$, and reformulates \eqref{eq:nlp} into a sequence of (simpler) problems \eqref{eq:problem-statement}, where \cite[\S II]{pas2022alpaqa}
	\begin{equation*}
		f(\bm u) = \psi(\bm u) + \tfrac{1}{2} \dist_\Sigma^2(h(\bm u)+\Sigma^{-1} y, D), \quad g(\bm u) = \delta_C(\bm u).
	\end{equation*}

	For both problems we run \alpaqa{} with default configurations, and compare three different ALM inner solvers: \cref{alg:zerofprpp}, \zerofpr{} and \panoc{}.
	Convergence is declared when $\norm{ \bm u - \proj_{C}(\bm u - \nabla f(\bm{u}))}_\infty \leq 10^{-4}$ and $\norm{ c(\bm x) - \proj_{D}(c(\bm{x}) + \Sigma^{-1} y) }_\infty \leq 10^{-4}$.
	All inner solvers employ L-BFGS directions with a buffer size of $50$, and an estimated Lipschitz constant $L$ (cfr.\,\cref{rem:adaptive}), with \panoc{} using the improved linesearch from \cite{pas2022alpaqa,demarchi2022proximal}.
	\Cref{alg:zerofprpp} uses $\gamma = \nicefrac{1.95}{L}$, the others $\gamma = \nicefrac{0.95}{L}$ (cfr.\,\cref{rem:larger-stepsizes}).

	\Cref{fig:quadcopter,fig:bicycle} visualize the total number of function and gradient evaluations of $f$ per MPC time step for the quadcopter and bicycle problem, respectively, for both cold (left) and warm (right) starts.
	Remark how \cref{alg:zerofprpp} oftentimes significantly outperforms the other methods, especially during the most difficult initial MPC time steps.

	\begin{figure}[h]
		\centering
		\resizebox{\linewidth}{!}{

\definecolor{lightblue}{rgb}{0.00,0.6056,0.9787}
\definecolor{darkblue}{rgb}{0.00,0.40,0.70}

\definecolor{lightorange}{rgb}{0.8889,0.4356,0.2781}
\colorlet{darkorange}{orange!85!black}

\definecolor{lightgreen}{rgb}{0.65,0.85,0.70}
\definecolor{darkgreen}{rgb}{0.2422,0.6433,0.3044}

\begin{tikzpicture}

\begin{groupplot}[
    group style={
        group size=2 by 1,
        horizontal sep=1.4cm,
        ylabels at=edge left,
        xticklabels at=edge bottom,
    },
    width=0.47\textwidth,
    height=8.2cm,
    xmin=1, xmax=30,
    xtick={5,10,15,20,25,30},
    xlabel={MPC time step},
    ymode=log,
    ymin=1e2, ymax=7e4,
    grid=both,
    major grid style={draw=black!12},
    minor grid style={draw=black!6},
    tick align=outside,
    tick style={black!70},
    label style={font=\normalsize},
    ticklabel style={font=\small},
    title style={font=\normalsize},
    every axis plot/.append style={
        mark size=1.8pt,
        line width=1.1pt,
    },
]

\nextgroupplot[
    title={Cold start},
    ylabel={Function + gradient evaluations},
    legend to name=grouplegend,
    legend columns=3,
    legend style={
        draw,
        fill=white,
        fill opacity=0.9,
        text opacity=1,
        row sep = 1pt,
        column sep=8pt,
    },
]

\addplot[
    color=darkblue,
    mark=*,
]
table[row sep=\\]{
\\
            1.0  33900.0  \\
            2.0  32314.0  \\
            3.0  32398.0  \\
            4.0  30068.0  \\
            5.0  27137.0  \\
            6.0  26664.0  \\
            7.0  24684.0  \\
            8.0  24123.0  \\
            9.0  23945.0  \\
            10.0  17717.0  \\
            11.0  14923.0  \\
            12.0  9332.0  \\
            13.0  5286.0  \\
            14.0  5120.0  \\
            15.0  4841.0  \\
            16.0  4679.0  \\
            17.0  4327.0  \\
            18.0  3475.0  \\
            19.0  3814.0  \\
            20.0  3688.0  \\
            21.0  4183.0  \\
            22.0  4021.0  \\
            23.0  3847.0  \\
            24.0  3910.0  \\
            25.0  4043.0  \\
            26.0  3637.0  \\
            27.0  3877.0  \\
            28.0  3745.0  \\
            29.0  3850.0  \\
            30.0  3550.0  \\
};
\addlegendentry{\panoc{}}

\addplot[
    color=darkorange,
    mark=square*,
]
table[row sep=\\]{
\\
            1.0  20073.0  \\
            2.0  17821.0  \\
            3.0  17759.0  \\
            4.0  17037.0  \\
            5.0  16954.0  \\
            6.0  18662.0  \\
            7.0  15502.0  \\
            8.0  16391.0  \\
            9.0  13457.0  \\
            10.0  11084.0  \\
            11.0  10283.0  \\
            12.0  8885.0  \\
            13.0  7985.0  \\
            14.0  4893.0  \\
            15.0  4117.0  \\
            16.0  3797.0  \\
            17.0  4083.0  \\
            18.0  3907.0  \\
            19.0  3371.0  \\
            20.0  3781.0  \\
            21.0  3213.0  \\
            22.0  4109.0  \\
            23.0  3527.0  \\
            24.0  3453.0  \\
            25.0  4079.0  \\
            26.0  3277.0  \\
            27.0  3315.0  \\
            28.0  2907.0  \\
            29.0  3447.0  \\
            30.0  3609.0  \\
};
\addlegendentry{\zerofpr{}}

\addplot[
    color=darkgreen,
    mark=triangle*,
]
table[row sep=\\]{
\\
            1.0  14175.0  \\
            2.0  13158.0  \\
            3.0  13288.0  \\
            4.0  14184.0  \\
            5.0  13069.0  \\
            6.0  13364.0  \\
            7.0  13186.0  \\
            8.0  12254.0  \\
            9.0  13192.0  \\
            10.0  10657.0  \\
            11.0  9318.0  \\
            12.0  9711.0  \\
            13.0  6839.0  \\
            14.0  2691.0  \\
            15.0  2405.0  \\
            16.0  2598.0  \\
            17.0  2339.0  \\
            18.0  2284.0  \\
            19.0  2761.0  \\
            20.0  2425.0  \\
            21.0  2574.0  \\
            22.0  2445.0  \\
            23.0  2335.0  \\
            24.0  2523.0  \\
            25.0  2267.0  \\
            26.0  2335.0  \\
            27.0  2355.0  \\
            28.0  2546.0  \\
            29.0  2504.0  \\
            30.0  2596.0  \\
};
\addlegendentry{\zerofprpp{}}

\nextgroupplot[
    title={Warm start},
    yticklabels={},
]

\addplot[
    color=darkblue,
    mark=*,
]
table[row sep=\\]{
\\
            1.0  33900.0  \\
            2.0  45779.0  \\
            3.0  45074.0  \\
            4.0  34767.0  \\
            5.0  24624.0  \\
            6.0  1752.0  \\
            7.0  987.0  \\
            8.0  624.0  \\
            9.0  402.0  \\
            10.0  598.0  \\
            11.0  367.0  \\
            12.0  418.0  \\
            13.0  280.0  \\
            14.0  508.0  \\
            15.0  277.0  \\
            16.0  429.0  \\
            17.0  222.0  \\
            18.0  258.0  \\
            19.0  468.0  \\
            20.0  237.0  \\
            21.0  300.0  \\
            22.0  279.0  \\
            23.0  348.0  \\
            24.0  168.0  \\
            25.0  363.0  \\
            26.0  273.0  \\
            27.0  201.0  \\
            28.0  342.0  \\
            29.0  207.0  \\
            30.0  447.0  \\
};

\addplot[
    color=darkorange,
    mark=square*,
]
table[row sep=\\]{
\\
            1.0  20073.0  \\
            2.0  36651.0  \\
            3.0  41111.0  \\
            4.0  18311.0  \\
            5.0  14112.0  \\
            6.0  363.0  \\
            7.0  891.0  \\
            8.0  237.0  \\
            9.0  363.0  \\
            10.0  247.0  \\
            11.0  231.0  \\
            12.0  247.0  \\
            13.0  287.0  \\
            14.0  209.0  \\
            15.0  293.0  \\
            16.0  191.0  \\
            17.0  247.0  \\
            18.0  299.0  \\
            19.0  203.0  \\
            20.0  327.0  \\
            21.0  175.0  \\
            22.0  333.0  \\
            23.0  207.0  \\
            24.0  331.0  \\
            25.0  253.0  \\
            26.0  147.0  \\
            27.0  143.0  \\
            28.0  367.0  \\
            29.0  203.0  \\
            30.0  327.0  \\
};

\addplot[
    color=darkgreen,
    mark=triangle*,
]
table[row sep=\\]{
\\
            1.0  14175.0  \\
            2.0  24672.0  \\
            3.0  23647.0  \\
            4.0  13725.0  \\
            5.0  10575.0  \\
            6.0  469.0  \\
            7.0  1750.0  \\
            8.0  237.0  \\
            9.0  288.0  \\
            10.0  218.0  \\
            11.0  381.0  \\
            12.0  299.0  \\
            13.0  209.0  \\
            14.0  317.0  \\
            15.0  268.0  \\
            16.0  377.0  \\
            17.0  318.0  \\
            18.0  202.0  \\
            19.0  256.0  \\
            20.0  226.0  \\
            21.0  286.0  \\
            22.0  295.0  \\
            23.0  179.0  \\
            24.0  179.0  \\
            25.0  374.0  \\
            26.0  198.0  \\
            27.0  283.0  \\
            28.0  200.0  \\
            29.0  311.0  \\
            30.0  202.0  \\
};

\end{groupplot}

\node[anchor=south] at ($(group c1r1.north)!0.5!(group c2r1.north)+(0,0.8cm)$)
{\pgfplotslegendfromname{grouplegend}};

\end{tikzpicture}}
		\caption{Performance (function + gradient calls) per MPC time step for the quadcopter with horizon $N = 32$. Cold (left) and warm (right) starts.}
		\label{fig:quadcopter}
	\end{figure}
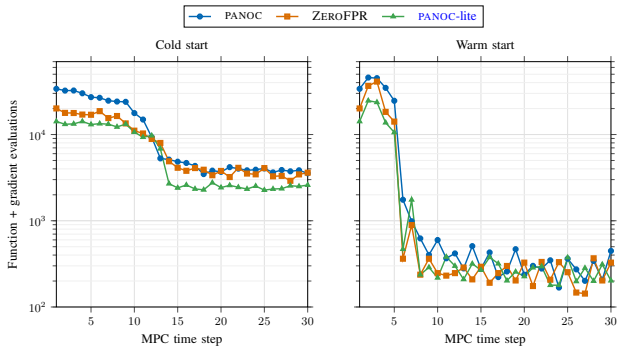

	\begin{figure}[h]
		\centering
		\resizebox{\linewidth}{!}{

\definecolor{lightblue}{rgb}{0.00,0.6056,0.9787}
\definecolor{darkblue}{rgb}{0.00,0.40,0.70}

\definecolor{lightorange}{rgb}{0.8889,0.4356,0.2781}
\colorlet{darkorange}{orange!85!black}

\definecolor{lightgreen}{rgb}{0.65,0.85,0.70}
\definecolor{darkgreen}{rgb}{0.2422,0.6433,0.3044}

\begin{tikzpicture}

\begin{groupplot}[
    group style={
        group size=2 by 1,
        horizontal sep=1.4cm,
        ylabels at=edge left,
        xticklabels at=edge bottom,
    },
    width=0.47\textwidth,
    height=8.2cm,
    xmin=1, xmax=30,
    xtick={5,10,15,20,25,30},
    xlabel={MPC time step},
    ymode=log,
    ymin=3e1, ymax=2e4,
    grid=both,
    major grid style={draw=black!12},
    minor grid style={draw=black!6},
    tick align=outside,
    tick style={black!70},
    label style={font=\normalsize},
    ticklabel style={font=\small},
    title style={font=\normalsize},
    every axis plot/.append style={
        mark size=1.8pt,
        line width=1.1pt,
    },
]

\nextgroupplot[
    title={Cold start},
    ylabel={Function + gradient evaluations},
    legend to name=grouplegendCS,
    legend columns=3,
    legend style={
        draw,
        fill=white,
        fill opacity=0.9,
        text opacity=1,
        row sep = 1pt,
        column sep=8pt,
    },
]

\addplot[
    color=darkblue,
    mark=*,
]
table[row sep=\\]{
\\
            1.0  11496.0  \\
            2.0  9661.0  \\
            3.0  5604.0  \\
            4.0  5974.0  \\
            5.0  5120.0  \\
            6.0  5626.0  \\
            7.0  4791.0  \\
            8.0  5726.0  \\
            9.0  4885.0  \\
            10.0  4224.0  \\
            11.0  4813.0  \\
            12.0  3472.0  \\
            13.0  3379.0  \\
            14.0  2976.0  \\
            15.0  2771.0  \\
            16.0  2027.0  \\
            17.0  1619.0  \\
            18.0  2216.0  \\
            19.0  2897.0  \\
            20.0  1977.0  \\
            21.0  1722.0  \\
            22.0  1236.0  \\
            23.0  1284.0  \\
            24.0  1220.0  \\
            25.0  1123.0  \\
            26.0  721.0  \\
            27.0  670.0  \\
            28.0  976.0  \\
            29.0  643.0  \\
            30.0  616.0  \\
};
\addlegendentry{\panoc{}}

\addplot[
    color=darkorange,
    mark=square*,
]
table[row sep=\\]{
\\
            1.0  8380.0  \\
            2.0  8040.0  \\
            3.0  4663.0  \\
            4.0  4599.0  \\
            5.0  7309.0  \\
            6.0  3601.0  \\
            7.0  3143.0  \\
            8.0  4373.0  \\
            9.0  2899.0  \\
            10.0  2961.0  \\
            11.0  2883.0  \\
            12.0  2255.0  \\
            13.0  2269.0  \\
            14.0  1935.0  \\
            15.0  1193.0  \\
            16.0  953.0  \\
            17.0  1033.0  \\
            18.0  1859.0  \\
            19.0  1805.0  \\
            20.0  1253.0  \\
            21.0  2921.0  \\
            22.0  1085.0  \\
            23.0  875.0  \\
            24.0  3032.0  \\
            25.0  1419.0  \\
            26.0  1389.0  \\
            27.0  667.0  \\
            28.0  681.0  \\
            29.0  403.0  \\
            30.0  429.0  \\
};
\addlegendentry{\zerofpr{}}

\addplot[
    color=darkgreen,
    mark=triangle*,
]
table[row sep=\\]{
\\
            1.0  4015.0  \\
            2.0  3581.0  \\
            3.0  2865.0  \\
            4.0  2860.0  \\
            5.0  2636.0  \\
            6.0  2304.0  \\
            7.0  2238.0  \\
            8.0  2209.0  \\
            9.0  2040.0  \\
            10.0  1931.0  \\
            11.0  2389.0  \\
            12.0  1640.0  \\
            13.0  1612.0  \\
            14.0  1408.0  \\
            15.0  1044.0  \\
            16.0  864.0  \\
            17.0  753.0  \\
            18.0  928.0  \\
            19.0  1038.0  \\
            20.0  860.0  \\
            21.0  858.0  \\
            22.0  748.0  \\
            23.0  651.0  \\
            24.0  561.0  \\
            25.0  430.0  \\
            26.0  320.0  \\
            27.0  299.0  \\
            28.0  253.0  \\
            29.0  316.0  \\
            30.0  257.0  \\
};
\addlegendentry{\zerofprpp{}}

\nextgroupplot[
    title={Warm start},
    yticklabels={},
]

\addplot[
    color=darkblue,
    mark=*,
]
table[row sep=\\]{
\\
            1.0  11496.0  \\
            2.0  9529.0  \\
            3.0  4871.0  \\
            4.0  4911.0  \\
            5.0  4706.0  \\
            6.0  4023.0  \\
            7.0  3819.0  \\
            8.0  4751.0  \\
            9.0  3814.0  \\
            10.0  3681.0  \\
            11.0  4530.0  \\
            12.0  2775.0  \\
            13.0  2693.0  \\
            14.0  2547.0  \\
            15.0  2651.0  \\
            16.0  1667.0  \\
            17.0  1625.0  \\
            18.0  2021.0  \\
            19.0  1808.0  \\
            20.0  1617.0  \\
            21.0  1155.0  \\
            22.0  725.0  \\
            23.0  782.0  \\
            24.0  553.0  \\
            25.0  81.0  \\
            26.0  93.0  \\
            27.0  84.0  \\
            28.0  84.0  \\
            29.0  84.0  \\
            30.0  60.0  \\
};

\addplot[
    color=darkorange,
    mark=square*,
]
table[row sep=\\]{
\\
            1.0  8380.0  \\
            2.0  5996.0  \\
            3.0  3231.0  \\
            4.0  4155.0  \\
            5.0  4649.0  \\
            6.0  2751.0  \\
            7.0  2279.0  \\
            8.0  2615.0  \\
            9.0  2317.0  \\
            10.0  2303.0  \\
            11.0  2711.0  \\
            12.0  1939.0  \\
            13.0  1695.0  \\
            14.0  1599.0  \\
            15.0  1003.0  \\
            16.0  1071.0  \\
            17.0  833.0  \\
            18.0  859.0  \\
            19.0  785.0  \\
            20.0  815.0  \\
            21.0  751.0  \\
            22.0  447.0  \\
            23.0  447.0  \\
            24.0  398.0  \\
            25.0  55.0  \\
            26.0  51.0  \\
            27.0  59.0  \\
            28.0  57.0  \\
            29.0  53.0  \\
            30.0  57.0  \\
};

\addplot[
    color=darkgreen,
    mark=triangle*,
]
table[row sep=\\]{
\\
            1.0  4015.0  \\
            2.0  3462.0  \\
            3.0  2410.0  \\
            4.0  2504.0  \\
            5.0  2351.0  \\
            6.0  1902.0  \\
            7.0  1914.0  \\
            8.0  1985.0  \\
            9.0  1964.0  \\
            10.0  1723.0  \\
            11.0  2016.0  \\
            12.0  1487.0  \\
            13.0  1382.0  \\
            14.0  1298.0  \\
            15.0  893.0  \\
            16.0  836.0  \\
            17.0  764.0  \\
            18.0  785.0  \\
            19.0  819.0  \\
            20.0  777.0  \\
            21.0  573.0  \\
            22.0  379.0  \\
            23.0  455.0  \\
            24.0  345.0  \\
            25.0  60.0  \\
            26.0  56.0  \\
            27.0  60.0  \\
            28.0  58.0  \\
            29.0  57.0  \\
            30.0  61.0  \\
};

\end{groupplot}

\node[anchor=south] at ($(group c1r1.north)!0.5!(group c2r1.north)+(0,0.8cm)$)
{\pgfplotslegendfromname{grouplegendCS}};

\end{tikzpicture}}
		\caption{Performance (function + gradient calls) per MPC time step for the bicycle with horizon $N = 32$. Cold (left) and warm (right) starts.}
		\label{fig:bicycle}
	\end{figure}
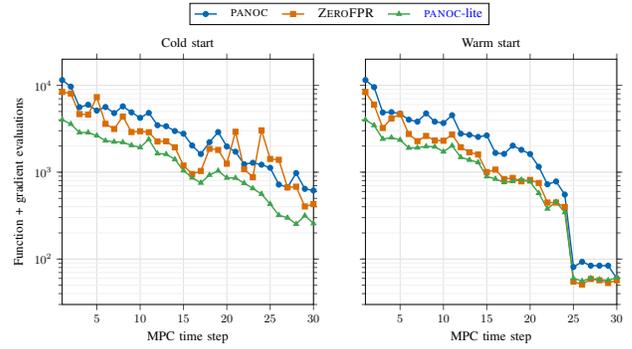

\subsection{Sparse logistic regression}\label{sec:logistic-regression}%
	Next, consider sparse logistic regression problems \eqref{eq:problem-statement} with
	\begin{equation*}
		f(x) = \sum_{i = 1}^m \ln \left( 1 + \exp^{-b_i \langle a_i, x \rangle} \right), \quad g(x) = \lambda \norm{ x }_1.
	\end{equation*}
	Here $\lambda > 0$ is a sparsity inducing parameter, the vector \(a_i \in \R^n\) contains the features of the $i$-th data point, and \(b_i \in \{-1, 1\}\) is the corresponding class label.
	Let us denote by \(A \in \R^{m \times n}\) the matrix with $a_i^\top$ as $i$-th row, and by \(b \in \R^m\) the vector with $i$-th component $b_i$.
	Note that the optimal solution for \(\lambda \geq \lambda_{\max} \eqdef \frac{1}{2} \norm{ A^\top b }_{\infty}\) is \(x^\star = 0\).
	\begin{figure}[h]
	    \centering
	    \resizebox{\linewidth}{!}{\input{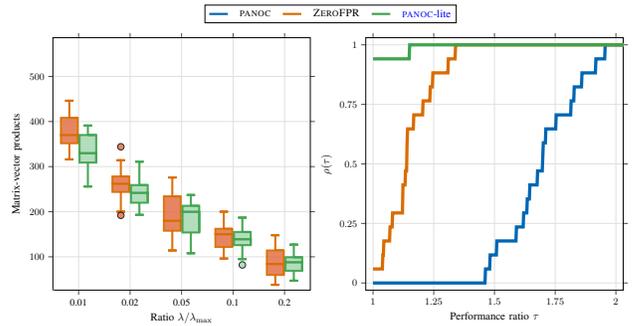}}
	    \caption{Performance comparison on a subset of the \textsc{Libsvm} problems. Left: Box plot of the number of matrix-vector products ($A$ and $A^\top$) for different choices of \(\lambda\). Boxes extend from the first quartile ($Q_1$) to the third quartile ($Q_3$), and the median value is indicated by the middle line. The whiskers extend to 1.5 times the interquartile range, i.e., $1.5 (Q_3 - Q_1)$. Right: Performance profile of the number of matrix-vector products ($A$ and $A^\top$) needed for \(\lambda = 0.01 \cdot \lambda_{\max}\).}%
	    \label{fig:logistic-regression}%
	\end{figure}
	We compare \cref{alg:zerofprpp} against \zerofpr{} and \panoc{} on a subset of \textsc{Libsvm} \cite{chang2011libsvm} (problems \texttt{a1a,\dots,a9a} and \texttt{w1a,\dots,w8a} to be precise), and this for \(\nicefrac{\lambda}{\lambda_{\max}} \in \{0.01, 0.02, 0.05, 0.1, 0.2\}\).
	All algorithms use L-BFGS directions with buffer size $5$.
	The Lipschitz constant is estimated as before.
	Performance is measured in terms of the total number of matrix-vector products, and all methods are terminated whenever \(\norm{ \Rnat{\gamma} }_\infty \leq 10^{-6}\).

	Figure \ref{fig:logistic-regression} indicates that the sparse logistic regression problem becomes more difficult as \(\lambda\) becomes smaller.
	Interestingly, it also shows that \cref{alg:zerofprpp} performs relatively similar to \zerofpr{} if \(\lambda\) is large, but outperforms \zerofpr{} when \(\lambda\) is large, i.e., in the more difficult setting.
	It also visualizes a performance profile\footnote{The higher and more leftward, the better the solver. $\tau = 1$: fraction of problems for which solver is the best; $\tau \to \infty$: fraction of problems solved.} \cite{dolan2002benchmarking} for the difficult case \(\lambda = 0.01 \cdot \lambda_{\max}\).
	In around $95\%$ of the problems, \cref{alg:zerofprpp} performs best, and it always remains within a factor $1.25$ of the best-performing solver.

\subsection{CUTEst benchmark collection}
	\begin{figure}[h]
		\centering
		\resizebox{\linewidth}{!}{\input{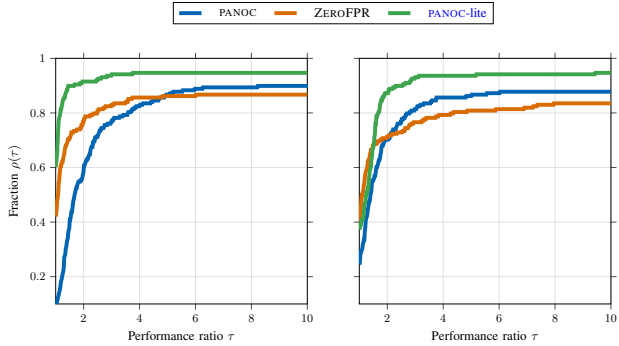}}
		\caption{Comparison of different ALM subproblem solvers within \alpaqa{}, applied to the \textsc{CUTEst} benchmarks. Performance profiles of the total number of iterations (left), and function + gradient evaluations (right) needed to solve the NLP.}
		\label{fig:cutest-performance}
	\end{figure}
	Finally, we consider problems from the \textsc{CUTEst} benchmarks.
	These are NLPs of the form \eqref{eq:nlp}, so we again use \alpaqa{} as described in \cref{sec:mpc}.
	Primal and dual tolerances are set to $10^{-3}$, and all methods use L-BFGS directions with buffer size $25$.

	\Cref{fig:cutest-performance} shows performance profiles in terms of the total number of iterations performed by the subproblem solver, and the total number of function and gradient evaluations, aggregated over the different ALM subproblems.
	We observe that \cref{alg:zerofprpp} outperforms \zerofpr{} and \panoc{} in terms of both performance metrics.

	\section{Conclusion}
		This work proposed a novel merit function for composite minimization problems, which, unlike existing alternatives, can be computed without gradient evaluations of the smooth objective term.
We then leveraged this merit function to design a linesearch method that improves upon state-of-the-art alternatives in the following ways:
(i) additional backtracking steps require no extra gradient evaluations,
and (ii) the method supports larger stepsizes, compatible to those of the standard proximal-gradient method.
After establishing global subsequential and local superlinear convergence of the method, we demonstrated the proposed method's effectiveness on nonlinear model predictive control problems with collision avoidance constraints, as well as on the \textsc{LIBSVM} and \textsc{Cute}st benchmarks.


	\bibliographystyle{IEEEtranS}
	\bibliography{TeX/references_abbr.bib}

	\appendix
		\setcounter{theorem}{0}%
		\renewcommand{\thetheorem}{A.\arabic{theorem}}%
		\renewcommand{\theHtheorem}{A.\arabic{theorem}}
		\begin{lemma}\label{thm:QUB}%
	Suppose that \cref{assumption:basic} holds and let \(x^\star\) be a fixed point of \(\prox_{\gamma g}(\id - \gamma \nabla f)\) for some \(\gamma>0\).
	Then, for any \(x\in\R^n\) it holds that
	\[
		\psi_\gamma(x)
	\leq
		\varphi(x^\star)
		+
		\tfrac{1+\gamma L_f}{2\gamma}\Vert x-\bar x^\star\Vert^2
		-
		\tfrac{1}{2\gamma}\Vert x-\hat x+\gamma\nabla f(x^\star)\Vert^2,
	\]
	where \(\psi_\gamma:=\varphi\circ\prox_{\gamma g}\) and \(\bar x^\star:=x^\star - \gamma \nabla f(x^\star)\).
\end{lemma}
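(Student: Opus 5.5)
Here \(\hat x\) denotes \(\prox_{\gamma g}(x)\). We split \(\psi_\gamma(x)=f(\hat x)+g(\hat x)\) and bound the two terms separately. One term uses the descent lemma for \(f\) around \(x^\star\); the other uses convexity of \(g\) together with the fixed-point characterization of \(x^\star\). Since \(x^\star=\prox_{\gamma g}(\bar x^\star)\), the prox optimality condition gives \(v^\star:=-\nabla f(x^\star)\in\partial g(x^\star)\). Likewise, \(\hat x=\prox_{\gamma g}(x)\) gives \(\gamma^{-1}(x-\hat x)\in\partial g(\hat x)\).

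\textbf{Step 1 (bound on \(g\)).} Apply the subgradient inequality at \(\hat x\) with subgradient \(\gamma^{-1}(x-\hat x)\), evaluated at the point \(x^\star\):
\[
g(\hat x)\le g(x^\star)+\tfrac1\gamma\langle x-\hat x,\hat x-x^\star\rangle .
\]

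\textbf{Step 2 (bound on \(f\)).} By \(L_f\)-smoothness,
\[
f(\hat x)\le f(x^\star)+\langle\nabla f(x^\star),\hat x-x^\star\rangle+\tfrac{L_f}{2}\norm{\hat x-x^\star}^2 .
\]
Summing with Step 1 yields
\[
\psi_\gamma(x)\le\varphi(x^\star)+\tfrac1\gamma\langle x-\hat x+\gamma\nabla f(x^\star),\hat x-x^\star\rangle+\tfrac{L_f}{2}\norm{\hat x-x^\star}^2 .
\]

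\textbf{Step 3 (completing the square).} Set \(a:=x-\hat x+\gamma\nabla f(x^\star)\) and \(b:=\hat x-x^\star\). Then \(a+b=x-x^\star+\gamma\nabla f(x^\star)=x-\bar x^\star\). The identity \(2\langle a,b\rangle=\norm{a+b}^2-\norm a^2-\norm b^2\) turns the inner-product term into
\[
\tfrac1{2\gamma}\norm{x-\bar x^\star}^2-\tfrac1{2\gamma}\norm{a}^2-\tfrac1{2\gamma}\norm{b}^2 .
\]
What remains is to control \(\tfrac{L_f}{2}\norm b^2-\tfrac1{2\gamma}\norm b^2\) by \(\tfrac{L_f}{2}\norm{x-\bar x^\star}^2\). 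For this, use nonexpansiveness of \(\prox_{\gamma g}\) and \(x^\star=\prox_{\gamma g}(\bar x^\star)\), which give
\[
\norm b=\norm{\prox_{\gamma g}(x)-\prox_{\gamma g}(\bar x^\star)}\le\norm{x-\bar x^\star}.
\]
Hence \(\tfrac{L_f}{2}\norm b^2\le\tfrac{\gamma L_f}{2\gamma}\norm{x-\bar x^\star}^2\). We can simply drop \(-\tfrac1{2\gamma}\norm b^2\le0\), which gives exactly the coefficient \(\tfrac{1+\gamma L_f}{2\gamma}\) and the subtracted term \(\tfrac1{2\gamma}\norm a^2\) in the claim.

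The only delicate point is Step 3, namely choosing the right pair \((a,b)\) so that the cross term recombines into \(\norm{x-\bar x^\star}^2\). One should also note that the estimate is somewhat loose, since discarding \(-\tfrac1{2\gamma}\norm b^2\) wastes a term, but the stated bound only needs this cruder version. All other steps are standard inequalities requiring only Assumption~\ref{assumption:basic}; in particular, no restriction on \(\gamma\) is needed.
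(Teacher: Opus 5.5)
Your proof is correct. It differs from the paper's only in how the $g$-part is bounded, but that difference buys you something.

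The paper writes $\psi_\gamma(x)=f(\hat x)+g^\gamma(x)-\tfrac1{2\gamma}\norm{x-\hat x}^2$ and applies the $\gamma^{-1}$-descent lemma to the Moreau envelope at $\bar x^\star$. It uses $\nabla g^\gamma(\bar x^\star)=-\nabla f(x^\star)$ and $g^\gamma(\bar x^\star)=g(x^\star)+\tfrac\gamma2\norm{\nabla f(x^\star)}^2$, both of which follow from the fixed-point property. A short computation shows that this step amounts exactly to the trivial infimum bound $g(\hat x)+\tfrac1{2\gamma}\norm{x-\hat x}^2\le g(x^\star)+\tfrac1{2\gamma}\norm{x-x^\star}^2$.

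You instead use the subgradient inequality with $\gamma^{-1}(x-\hat x)\in\partial g(\hat x)$. This is the strong-convexity version of the same bound, and it carries the extra term $-\tfrac1{2\gamma}\norm{\hat x-x^\star}^2$. After the same completion of squares, your intermediate estimate is therefore sharper than the paper's by exactly $\tfrac1{2\gamma}\norm{\hat x-x^\star}^2$.

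Your route is also more elementary: it needs no Moreau envelope or gradient formula for it. It uses the fixed-point property only once, in the nonexpansiveness step $\norm{\hat x-x^\star}\le\norm{x-\bar x^\star}$. The inclusion $-\nabla f(x^\star)\in\partial g(x^\star)$ you state at the outset is never actually used.

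If you kept the term $-\tfrac1{2\gamma}\norm{b}^2$ instead of discarding it, you would improve the constant $1+\gamma L_f$ to $\max\{1,\gamma L_f\}$. That would slightly tighten the constants in the proof of \cref{thm:superlinear-convergence}, though the stated lemma does not need it.
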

\begin{proof}
	From \(x^\star=\prox_{\gamma g}(x^\star - \gamma \nabla f(x^\star))\), it follows that
	\begin{equation*}
		\begin{aligned}
			\nabla g^\gamma(\bar x^\star)
		&= \tfrac{1}{\gamma}\left[
			x^\star - \gamma \nabla f(x^\star)
			-
			\prox_{\gamma g}(x^\star - \gamma \nabla f(x^\star))
		\right]\\
		&= -\nabla f(x^\star).
		\end{aligned}
	\end{equation*}
	Then, by \(L_f\)- and \(\gamma^{-1}\)-smoothness of \(f\) and \(g^\gamma\), respectively,
  \begin{equation*}
    \begin{aligned}
      \psi_\gamma(&x) = f(\hat x) + g^\gamma(x) - \tfrac{1}{2\gamma}\Vert x-\hat x \Vert^2\\
      \leq{} &f(x^\star) + \langle \nabla f(x^\star), \hat x-x^\star \rangle + \tfrac{L_f}{2}\Vert \hat x-x^\star\Vert^2\\
      &+ g^\gamma(x^\star - \gamma \nabla f(x^\star)) - \langle \nabla f(x^\star),x-x^\star+\gamma\nabla f(x^\star) \rangle\\
      &+ \tfrac{1}{2\gamma}\Vert x^\star - \gamma \nabla f(x^\star)-x \Vert^2 - \tfrac{1}{2\gamma}\Vert x-\hat x\Vert^2\\
      ={} &\varphi(x^\star) - \tfrac{\gamma}{2}\Vert \nabla f(x^\star) \Vert^2 + \langle \nabla f(x^\star), \hat x-x \rangle\\ &+ \tfrac{L_f}{2}\Vert \hat x-x^\star \Vert^2 - \tfrac{1}{2\gamma}\Vert x-\hat x \Vert^2 + \tfrac{1}{2\gamma}\Vert \bar x^\star-x\Vert^2,
    \end{aligned}
  \end{equation*}
	where the last step used
	\(
		g^\gamma(x^\star - \gamma \nabla f(x^\star))
	=
    g(x^\star)
		+
		\tfrac{\gamma}{2}\Vert \nabla f(x^\star) \Vert^2.
	\)
	The claim follows by completing the squares and from nonexpansiveness of \(\prox_{\gamma g}\) (recall that \(x^\star=\prox_{\gamma g}(\bar x^\star)\)).
\end{proof}


\end{document}